\newtheorem{theorem}{Theorem}
\newtheorem{lemma}{Lemma}
\newtheorem{proposition}{Proposition}
\newtheorem{corollary}[theorem]{Corollary}
\newtheorem{remark}{Remark}
\newcommand{\sinc}{\text{sc}}
\newcommand{\var}{\mbox{\rm Var}}
\newcommand{\cov}{\text{cov}}
\newcommand{\Prob}{\mathbb P}
\newcommand{\Esp}{\mathbb E}
\newcommand{\eps}{\varepsilon}
\newcommand{\R}{\mathbb R}
\newcommand{\Hb}{\boldsymbol H}
\newcommand{\indicator}{\mathbf{1}}
\author{Jean-Marc Aza\"{i}s\thanks{Mailing address: Universit\'{e} de Toulouse, 
IMT, ESP, F31062 Toulouse Cedex 9, France. Email: jean-marc.azais@math.univ-toulouse.fr
}
\qquad
Federico Dalmao\thanks{
Departamento de Matem\'{a}tica y Estad\'{i}stica del Litoral, 
Universidad de la Rep\'{u}blica, A.P. 50000, Salto, Uruguay. 
E-mail: fdalmao@unorte.edu.uy.}
\qquad
Jos\'{e} R. Le\'{o}n\thanks{Escuela de Matem\'{a}tica. Facultad de Ciencias. 
Universidad Central de Venezuela. 
A.P. 47197, Los Chaguaramos, Caracas 1041-A, Venezuela. Email: jose.leon@ciens.ucv.ve
}}
\title{CLT for the zeros of Classical Random Trigonometric Polynomials}
\begin{document}
\maketitle
\begin{abstract}
We prove a Central Limit Theorem for the number of zeros of random
trigonometric polynomials of the form $K^{-1/2}\sum_{n=1}^{K} a_n\cos(nt)$, 
being $(a_n)_n$ independent standard Gaussian random variables. 
In particular we show that the variance is equivalent to $V^2K\pi$, $0<V^2<\infty$, as $K\to\infty$. 
This last result was recently proved by Su \& Shao in \cite{SuShao}.
Our approach is based on the Hermite/Wiener-Chaos decomposition for square-integrable functionals 
of a Gaussian process and on Rice Formula for zero counting.\\

Nous montrons un Th\'{e}or\`{e}me de la Limite Central pour le nombre de racines d'un polyn\^{o}me trigonom\'{e}trique al\'{e}atoire 
de la forme  {$K^{-1/2}\sum_{n=1}^Ka_n\cos(nt)$}, ici les $a_n$ sont des variables al\'{e}atoires Gaussiennes standard et ind\'{e}pendantes. 
En particulier, nos d\'{e}montrons que  la variance asymptotique du nombre de racines est \'{e}quivalent \`{a} $V^2K\pi$, pour une certaine constante $V>0$, lorsque $K\to\infty$. 
Ce dernier r\'{e}sultat a \'{e}t\'{e} r\'{e}cemment d\'{e}montr\'{e} par Su \& Shao dans \cite{SuShao}.
Notre approche utilise la d\'{e}composition dans le chaos d'It\^{o}-Wiener d'une fonctionnelle non lin\'{e}aire de carr\'{e} int\'{e}grable et la formule de Rice.
\end{abstract}
\noindent{\bf Key words and phrases:} Classical trigonometric polynomials, Random cosines polynomials, 
Number of zeroes, CLT, Wiener Chaos.

\section{Introduction and Main Result}
Consider the classical random trigonometric polynomials, 
that is, the polynomials defined, for $K=1,2,\dots$, by
\begin{equation}\label{f:tk}
T_{K}(t):=\frac{1}{\sqrt{K}}\sum^{K}_{n=1}a_{n}\cos(nt),
\end{equation}
where the coefficients $a_n$ are i.i.d. standard Gaussian random variables 
and $t\in[0,\pi]$. 
%

%
One of the main questions about random polynomials concerns 
the random variable: {\it number of zeros} (level crossings in general); 
in our case the number of zeros of $T_K$ 
on the interval $[0,\pi]$. 
The distribution of this random variable remains unknown, 
and one way to approach it is to describe it 
not directly but through its moments. 

The asymptotic expectation of the number of zeros of classical trigonometric polynomials 
on $[0,\pi]$ is known since Dunnage's work \cite{dunnage} to be $K/\sqrt{3}$. 
The variance of $T_K$ was conjectured by Farahmand \& Sambandham \cite{farahmandcov} and Granville \& Wigman \cite{granville} 
to be equivalent to $V^2 K\pi$, as $K$ grows to infinity, 
for some constant $V^2>0$.
Recently  Su \& Shao  in \cite{SuShao} give a positive answer to this conjecture. 
They obtain the exact asymptotic behavior of the variance of the number of roots on $[0,\pi]$ for the classical trigonometric polynomials.

In this article we revisit the above result. We exhibit a simplified proof. 
Furthermore, and this is the main result of our work, we establish a Central Limit Theorem (CLT) for the number of zeros of $T_K$ 
on $[0,\pi]$ as $K\to\infty$. 

Observe that since $\cos(t)=\cos(2\pi-t)$ the number of zeros of $T_K$ on $[0,2\pi]$ is exactly twice the number of zeros of $T_K$ on $[0,\pi]$. 
Hence, all the results can be trivially adapted to $[0,2\pi]$.
Furthermore, 
it is easy to see that $T_K$ and its derivatives are jointly non-degenerated Gaussian. 
Hence $T_K$ has not multiple zeros almost suerely.

Our main result is the following. 
Let  us denote the number of zeros of  a Gaussian process $Z : \R\to \R$ on the interval $I$ by $N_{Z}(I):=\#\{t\in I:\, Z(t)=0\}$. 

\begin{theorem}\label{main}
The normalized number of zeros of $T_K$ on the interval $[0,\pi]$ converges in distribution 
to a Gaussian random variable. 
More precisely
\begin{equation*}
\frac{N_{T_K}[0,\pi]-\Esp\left(N_{T_K}[0,\pi]\right)}{\sqrt{\pi K}}\Rightarrow N(0,V^2), 
\end{equation*}
as $K\to\infty$, being $0<V^2<\infty$. 
\end{theorem}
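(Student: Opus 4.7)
\emph{Proof plan.} The plan is to combine the Kac--Rice representation of the zero count with the Wiener chaos decomposition of the resulting functional, and then apply the Peccati--Tudor fourth moment theorem to each chaos component. Since $T_K$ has a.s.\ no multiple zeros, one writes
\begin{equation*}
N_{T_K}[0,\pi] = \lim_{\delta\downarrow 0} \frac{1}{2\delta}\int_0^\pi \indicator_{\{|T_K(t)|\le\delta\}}\, |T_K'(t)|\, dt,
\end{equation*}
so the counting random variable is a square-integrable functional of the Gaussian space generated by $(a_n)_{n=1}^K$. After standardizing the pair $(T_K(t),T_K'(t))$ by its own standard deviations, one Hermite-expands the regularized integrand and then lets $\delta\downarrow 0$ to obtain a chaos decomposition
\begin{equation*}
N_{T_K}[0,\pi] - \Esp\bigl(N_{T_K}[0,\pi]\bigr) = \sum_{q\ge 2,\ q\text{ even}} J_q^{(K)},
\end{equation*}
where $J_q^{(K)}$ lives in the $q$-th Wiener chaos; odd orders vanish because the function $\indicator_{[-\delta,\delta]}(x)|y|$ is even in both variables.

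\emph{Variance asymptotics.} By the Hermite isometry, $\var(J_q^{(K)})$ is a double integral over $[0,\pi]^2$ of a polynomial of degree $q$ in the standardized covariances of $\bigl(T_K(s),T_K'(s),T_K(t),T_K'(t)\bigr)$. Using
\begin{equation*}
r_K(s,t) = \frac{1}{K}\sum_{n=1}^K \cos(ns)\cos(nt) = \frac{1}{2K}\bigl(D_K(t-s)+D_K(t+s)\bigr),
\end{equation*}
with $D_K$ a Dirichlet-type kernel concentrated on scale $1/K$, the mass of the integrand accumulates near the diagonal $s=t$ (plus a boundary contribution near $s+t\in\{0,2\pi\}$). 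The change of variables $u=K(t-s)$ reveals, for every fixed $q$, a limit
\begin{equation*}
\frac{\var(J_q^{(K)})}{K}\longrightarrow v_q^2\,\pi
\end{equation*}
for explicit $v_q^2\ge 0$, and summing yields $\var(N_{T_K}[0,\pi])/K\to V^2\pi$ with $V^2:=\sum_{q\ge 2}v_q^2\in(0,\infty)$; positivity follows from a direct check that $v_2^2>0$. This recovers the variance result of \cite{SuShao}.

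\emph{Gaussian limit.} For each fixed $q$, the Peccati--Tudor fourth moment theorem reduces the Gaussian convergence of $J_q^{(K)}/\sqrt{K}$ to convergence of its variance (already done) together with the vanishing of the contraction norms of the associated symmetric kernels; this reduces to $L^p$-estimates of $r_K$ and its mixed derivatives on $[0,\pi]^2$, which are $O(K^{-1+1/p})$ thanks to the sinc-type profile. A chaos-tail truncation --- cut the sum at level $Q$, control the remainder uniformly in $K$ via the variance bounds above, and let $Q\to\infty$ --- promotes the componentwise CLTs into the joint CLT of Theorem~\ref{main}.

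\emph{Main obstacles.} The two delicate points are: (i) the process is \emph{not} stationary, and the endpoints $t\in\{0,\pi\}$ carry genuine boundary contributions because $\cos(n\cdot 0)=1$ and $\cos(n\pi)=(-1)^n$, so the $D_K(s+t)$ piece must be isolated before invoking the local sinc asymptotics; and (ii) the target $|y|\delta_0(x)$ is singular, hence the Hermite coefficients of its $\delta$-regularization are not absolutely summable and the interchange of $\delta\downarrow 0$ with the chaos expansion requires a uniform-in-$\delta$ estimate of the chaos tails, which is the most technical part of the argument.
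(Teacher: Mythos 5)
Your plan follows essentially the same route as the paper: regularize the Kac counting formula, Hermite/Wiener-chaos expand the standardized pair $(T_K,T_K')$, localize the variance of each chaos component to the diagonal to identify $v_q^2$, apply the Peccati--Tudor fourth moment theorem componentwise, and close with a uniform-in-$K$ chaos-tail truncation. The two obstacles you flag are exactly the ones the paper resolves --- the endpoint/$c_K(s+t)$ contribution is discarded by a first-order Rice estimate showing the expected number of zeros near $t=0,\pi$ is $o(\sqrt{K})$, and the interchange of the regularization limit with the expansion is justified by continuity of the second moment in the level plus a Fatou argument --- so your outline is a faithful blueprint of the paper's proof.
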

In particular, we prove that 
\begin{equation*}
\lim_{K\to\infty}\frac{\var(N_{T_K}([0,K\pi]))}{K\pi}=\lim_{K\to\infty}\frac{\var(N_{X_K}([0,K\pi]))}{K\pi}, 
\end{equation*}
where $X_K$ are 
the stationary trigonometric polynomials defined by
\begin{equation}\label{Xk}
X_{K}(t)=\frac{1}{\sqrt{K}}\sum^{K}_{n=1}\left(a_{n}\cos(nt)+b_n\sin(nt)\right),
\end{equation}
being $a_n$, $b_n$ i.i.d. standard Gaussian random variables. 
The stationary trigonometric polynomials were studied by Granville \& Wigman \cite{granville}, Aza\"{i}s \& Le\'{o}n \cite{cltazaisleon}, 
see also the references therein. 
By \cite[Theorem 4.2]{cltazaisleon} and \cite[Eq (4)]{granville} we deduce that $\var(N_X([0,\pi]))$ is equivalent to $V^2K\pi$ with 
\begin{equation*}
V^2\approx 0.089.
\end{equation*}
As said above, Su \& Shao \cite{SuShao} show that the variance 
of the number of zeros of $T_K$ is equivalent to $cK\pi$ as $K\to\infty$ 
where $c$ is a complicated constant, $c\approx 0.257$. 

Therefore, our result is not in concordance with this value. 
We checked our result by numerical simulations.\\

As a by product of our proof, we obtain the following CLT 
for the number of zeros of a non-stationary Gaussian process 
on $[0,\infty)$ as the interval increases. 
The cardinal sine function is defined by 
\begin{equation*}
\sinc(x)=\frac{\sin(x)}{x}. 
\end{equation*}
\begin{corollary}\label{cor}
Let $T$ be the centered Gaussian process on $[0,\infty)$ 
with covariance function $r(s,t)=\textonehalf(\sinc(t-s)+\sinc(t+s))$.
Then, the number of zeros of $T$ on the interval $[0,K\pi]$ 
converges in distribution, after standardization, towards a standard Gaussian random variable.
\end{corollary}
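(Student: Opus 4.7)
The plan is to repeat the proof of Theorem~\ref{main} with $T$ in place of $T_K$. The time rescaling $\tilde T_K(u):=T_K(u/K)$ preserves the number of zeros, $N_{T_K}[0,\pi]=N_{\tilde T_K}[0,K\pi]$, and transforms the covariance of $T_K$ into the Riemann sum
\begin{equation*}
\Esp[\tilde T_K(u)\tilde T_K(v)]=\frac{1}{2K}\sum_{n=1}^{K}\Bigl[\cos\bigl(\tfrac{n(v-u)}{K}\bigr)+\cos\bigl(\tfrac{n(v+u)}{K}\bigr)\Bigr],
\end{equation*}
which converges, together with its first two partial derivatives and uniformly on compact subsets of $[0,\infty)^2$, to $r(u,v)$. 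So Theorem~\ref{main} is already a CLT for a Gaussian process on $[0,K\pi]$ whose covariance approximates $r$, and the corollary is a robustness statement for that analysis.

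First, I would apply Rice's formula and the Hermite expansion used in the proof of Theorem~\ref{main} directly to the process $T$, writing
\begin{equation*}
N_T[0,K\pi]-\Esp N_T[0,K\pi]=\sum_{q\ge 2}I_q^{(K)},
\end{equation*}
where $I_q^{(K)}$ lives in the $q$-th Wiener chaos of $T$ and $\var(I_q^{(K)})$ is an explicit double integral over $[0,K\pi]^2$ of a polynomial expression in $r$ and its partial derivatives. The decay $|r(u,v)|\le C(1+|u-v|)^{-1}+C(1+u+v)^{-1}$, together with analogous bounds for the derivatives (whose spectral densities are supported in $[-1,1]$), then gives $\var(I_q^{(K)})\sim v_q\,K\pi$ with $\sum_{q\ge 2}v_q=V^2$, by dominated convergence against the integrands already analysed in Theorem~\ref{main}.

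Asymptotic normality I would obtain chaos by chaos, either via the Fourth Moment Theorem or by mimicking the cumulant estimate of Theorem~\ref{main}; the tail $\sum_{q\ge q_0}I_q^{(K)}$ is uniformly negligible in $K$ thanks to the variance bound, allowing a truncation. The main obstacle is the lack of stationarity of $r$: the extra summand $\sinc(u+v)$ enters every moment and cumulant integral. Its decay $(u+v)^{-1}$ contributes only logarithmically in $K$, hence is negligible against $K\pi$, and after discarding it the analysis collapses to that of the stationary trigonometric polynomials~\eqref{Xk} treated in~\cite{cltazaisleon,granville}.
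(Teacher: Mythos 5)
Your proposal is essentially correct, but it takes a genuinely different route from the paper. You propose to redo the entire programme of Theorem~\ref{main} directly for the limit process $T$: build the Hermite/chaos expansion of $N_T[0,K\pi]$ in the isonormal space of $T$, establish $\var(I_q^{(K)})\sim v_qK\pi$ from the decay $|r|\le C(1/\tau+1/\sigma)$, and apply the Fourth Moment Theorem chaos by chaos. This works, because $T=B(\indicator_{[0,1]}(\cdot)\cos(t\,\cdot))$ lives on the same isonormal process and $r$ and its derivatives satisfy exactly the bounds \eqref{cota:supr}, so every contraction and variance estimate goes through verbatim; your observation that the non-stationary summand $\sinc(\sigma)$ contributes only logarithmically is the right reason the limit variance collapses to the stationary one. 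The paper instead \emph{transfers} the already-proved CLT from $\overline{T}_K$ to $\overline{T}$: it computes the cross-covariance $\rho_K(s,t)=\Esp\bigl(\overline{T}_K(s)\overline{T}(t)\bigr)$, shows it converges to $\overline{r}(s,t)$ with the same off-diagonal bounds, and deduces $\|I_q^{\overline{T}_K}-I_q^{\overline{T}}\|_{L^2}\to 0$ by expanding the square and reusing Proposition~\ref{conv:var}; the corollary then follows with no new contraction estimates. Your route buys self-containedness at the cost of duplicating all the lemmas for $T$; the paper's route is shorter and is precisely the point advertised in the introduction, namely that here the CLT for the limit process is a \emph{consequence} of the CLT for the polynomials rather than the other way around. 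Two small things you should not skip if you write your version out in full: you must still chop the extremes of $[0,K\pi]$ (Lemma~\ref{lemma:truncar} and Lemma~\ref{cotabaf} are needed for $T$ as well, since the variance of the standardized derivative is only controlled on $[t_0,K\pi-t_0]$), and the tail truncation $\sum_{q>Q}$ needs a bound uniform in $K$, obtained as in Lemma~\ref{var:unifacot} via Arcones' inequality, not merely from the convergence of each $\var(I_q^{(K)})$.
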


\noindent{\bf Background:} 
Classical random trigonometric polynomials appears in Phy-sics, 
for instance in nuclear physics (random matrix theory), statistical mechanics, 
quantum mechanics, theory of noise, see Granville \& Wigman \cite{granville} and references therein. 

\noindent {\bf Previous works:}
The number of zeros of different ensembles of random polynomials have attracted attention 
of physicists and mathematicians
for at least seventy years. 
Consequently, there is an extensive literature on the subject, 
starting with Littlewood \& Offord \cite{lo1, lo2, lo3} who studied algebraic polynomials, 
these works were complemented by those of Erd\"{o}s \& Offord \cite{erdos-offord}, Kac \cite{kac-roots} and 
Ibragimov \& Maslova \cite{ibragimov-maslova1, ibragimov-maslova2}.
The final result for the mean number of roots of an algebraic polynomial of degree $K$, 
is that, for $i.i.d.$ coefficients in the domain of attraction of the normal law,
the mean number of roots is equivalent to $2\log(K)/\pi$ 
for centered $a_n$ and half this quantity for non-centered $a_n$. 
Maslova \cite{maslova1, maslova2} established the asymptotic variance and a CLT 
for the number of zeros of algebraic polynomials.

%
In the case of $X_K$ the stationarity and the Gaussianity simplify largely the treatment of the level crossing counting problem. 
For this ensemble of (stationary) trigonometric polynomials 
Granville \& Wigman \cite{granville} gave a proof of the CLT for the number of zeros, 
using conditions on moments of order higher than the second. 
After that, Aza\"{i}s \& Le\'{on} \cite{cltazaisleon} extended this result to all levels and gave a simplified proof of the CLT 
lying on the Wiener Chaos decomposition and Taqqu-Peccati's method. 
In particular, avoiding conditions on higher moments than the second. 

%
Finally, for Classical Trigonometric Polynomials, 
Wilkins \cite{wilkins} proved that 
\begin{equation*}
\Esp\left(N_{T_K}[0,2\pi]\right)=\frac{1}{\sqrt{3}}\left[(2K+1)+D_1+\frac{D_2}{2K+1}+\frac{D_3}{(2K+1)^2}\right]+O\left(\frac{D_3}{(2K+1)^3}\right)
\end{equation*}
with $D_1=0.23\dots$, etc.

The leading asymptotic term was proven to be the same for non-centered coefficients and for dependent coefficients 
with constant correlation, see Farahmand \cite{farahmandvar} and references therein. 

Recently, Farahmand \& Li \cite{farahmandli} considered the mean number of roots of $T_K$ and $X_K$ 
allowing its coefficients to have different means and variances, but being independent and Gaussian. \\


\noindent{\bf The techniques:} 
In order to obtain our results we make use of the techniques of Wiener Chaos expansion, 
Peccati-Tudor's method for obtaining CLTs and Rice Formula. 

More precisely, 
we obtain the Wiener Chaos expansion for the normalized number of zeros of 
the normalized version of $T_K$ 
on the interval $[0,\pi]$. 
Then, we use this expansion in order to obtain the order of the asynptotic variance.

A key fact, is that 
removing the extremes of the interval, 
the behavior of (the covariance of the standardized version of) $T_K$ is very similar to that 
of (the covariance of) its stationary counterpart $X_K$, 
so the limit variances of the respective number of zeros coincide. 
Our work closely follows Aza\"{\i}s \& Le\'{o}n \cite{cltazaisleon}, 
but the asymptotic Gaussianity is obtained directly from Peccati-Tudor's method  
rather than through the $L^2$ proximity to the limit process.

In that sense, it is worth to remark that whereas in the case of stationary trigonometric polynomials 
the CLT is inherited from that of the limit process, 
in our case the CLT for the limit process is a consequence of the CLT for classical trigonometric polynomials.

The article is organized as follows. It contains three sections. The present one introducing the subject  and establishing  the main result. The second one contains the proof of the main result. This section is split into two subsections. The first deals with the expansion into the Wiener Chaos of the crossings and the second contains the proof of the central limit theorem for these crossings once centered and  normalized. The third section contains the proof of five useful lemmas and also includes a notation table. 

\section{Proof}
\subsection{Expansion into the Chaos.}
Let us replace $t$ by $t/K$, this permits us to look at the polynomials $T_K$ 
at a convenient scale and to find a limit for them. 
Thus, from now on we are concerned with the zeros on the interval $[0,K\pi]$ of
\begin{equation} \label{f:tt}
\widetilde T_K(t)=\frac{1}{\sqrt{K}}\sum^K_{n=1}a_n\cos\left(\frac{n}{K}t\right).   
\end{equation}
Similarly, let $ \widetilde X_K(t)=X_K(t/K)$, with $X_K$ defined in Equation \eqref{Xk}. 
Clearly $ \widetilde T_K$ and $ \widetilde X_K$ are centered Gaussian processes in $t\in[0,K\pi]$.

Denote by $c_K$ the covariance function of $ \widetilde X_K$, 
it is well known, see Aza\"{i}s \& Le\'{o}n \cite[Eq. 1]{cltazaisleon}, Granville \& Wigman \cite[Eq. 13]{granville}, that
\begin{equation}\label{cK}
c_K(t-s)=\Esp\left( \widetilde X_K(s) \widetilde X_K(t)\right)=\frac{1}{K}\sum^{K}_{n=1}\cos\left(\frac{n}{K}(t-s)\right).
\end{equation}
Note that $c_K(\cdot)$ can be seen as a Riemann sum for $\int^1_0\cos(u\cdot)du$. 
Further, $c_K$ can be expressed in closed form using Fej\'{e}r Kernel. 

A direct computation shows that 
the covariance function of the classical trigonometric polynomials, 
$r_K(s,t):=\Esp\left( \widetilde T_K(s) \widetilde T_K(t)\right)$, is given by
\begin{equation}\label{rK}
r_K(s,t)=\frac{1}{2}(c_K(t-s)+c_K(t+s)).
\end{equation}
In particular, the variance of $ \widetilde T_K(t)$ is 
\begin{equation*}
V^2_K(t):=r_K(t,t)=\frac{1}{2}(1+c_K(2t)). 
\end{equation*}
Thus, the limit variance, as $t\to\infty$, is $\textonehalf$ and not $1$ as in the stationary case. 
Needless to say that the processes  obtained from the $ \widetilde T_K$'s replacing the cosines by sines 
have also asymptotic variance $\textonehalf$. 
At $t=0$, $ \widetilde T_K$ and $ \widetilde X_K$ have the same variance.

From now on, we work on the case $s<t$ and denote $\tau:=t-s$ and $\sigma:=t+s$. 

It is convenient to use the standardized version of $ \widetilde T_K$, namely 
\begin{equation}\label{def}
\overline{T}_K(t):=\widetilde T_K(t)/V_K(t),
\end{equation} 
thus $\overline{T}_K$ has unit variances and its covariance, $\overline{r}_K$, is given by
\begin{equation}\label{barr}
\overline{r}_K(s,t)=\frac{c_K(\tau)+c_K(\sigma)}{\sqrt{1+c_K(2s)}\sqrt{1+c_K(2t)}}.
\end{equation}

\noindent{\bf Limit covariances:} 
We collect in the following lemma some basic properties of $c_K$ 
taken from Aza\"{i}s \& Le\'{o}n \cite[Eq (2.5)-(2.8)]{cltazaisleon}.
\begin{lemma}\label{lema:cK}
For $\tau\in[0,K\pi]$ we have
\begin{enumerate}
 \item as $K\to\infty$. 
\begin{equation}\label{limitck}
c_K(\tau)\to\sinc(\tau). 
\end{equation}
Furthermore, this convergence is uniform in compacts not containing $0$. 
Besides, the order one and order two derivatives of $r_K$ converge in the same manner to the 
corresponding derivatives of $r$.
\item for some constant $c$
\begin{multline}\label{cota:supr1}
|c_K(\tau)|\leq\frac{\pi}{\tau},\quad
|c^{\prime}_K(\tau)|\leq\frac{\pi}{\tau}+\frac{\pi^2}{2\tau^2},\\
|c^{\prime\prime}_K(\tau)|\leq c\left(\frac{1}{\tau}+\frac{1}{\tau^2}
+\frac{1}{\tau^3}\right).
\end{multline}
\end{enumerate}
\end{lemma}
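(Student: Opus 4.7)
The plan is to reduce both parts of the lemma to a single closed-form identity for $c_K$. Summing the geometric series $\sum_{n=1}^K e^{in\tau/K}$ and taking real parts yields
\[
c_K(\tau)=\frac{\sin(\tau/2)\cos\bigl((K+1)\tau/(2K)\bigr)}{K\sin(\tau/(2K))},\qquad \tau\in(0,K\pi],
\]
extended by $c_K(0)=1$; a product-to-sum step puts this in the convenient form $c_K(\tau)=[\sin(\tau+\tau/(2K))-\sin(\tau/(2K))]/(2K\sin(\tau/(2K)))$, which is easy to differentiate and to expand in powers of $1/K$.

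For part (1), I would observe that $c_K$ is the right-endpoint Riemann sum $\tfrac{1}{K}\sum_{n=1}^K\cos((n/K)\tau)$ of $\int_0^1\cos(u\tau)\,du=\sinc(\tau)$. On a compact $[a,b]\subset(0,\infty)$, and for $K$ large enough that $K\pi\ge b$, the standard $C^1$ Riemann-sum estimate gives $|c_K(\tau)-\sinc(\tau)|\le b/K$, uniformly in $\tau$. Differentiating under the sum, $c_K^{(j)}(\tau)$ is in turn the Riemann sum of $\sinc^{(j)}(\tau)$ for $j=1,2$, and the same argument gives uniform convergence on compacts of $(0,\infty)$. Uniform convergence of the first and second partial derivatives of $r_K(s,t)=\tfrac{1}{2}(c_K(t-s)+c_K(t+s))$ follows immediately by linearity.

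For part (2), the key input is the elementary inequality $\sin(x)\ge(2/\pi)x$ on $[0,\pi/2]$, which gives the uniform lower bound $K\sin(\tau/(2K))\ge \tau/\pi$ throughout $\tau\in(0,K\pi]$. Since the numerator in the closed form is bounded by $1$, this yields $|c_K(\tau)|\le \pi/\tau$ directly. Differentiating the closed form, each application of $\partial_\tau$ produces new terms with an extra factor of $\cos(\tau/(2K))/(2K\sin(\tau/(2K)))$ in the numerator, which the same lower bound controls by a constant times $1/\tau$; hence each differentiation brings in one more power of $1/\tau$, giving the announced bounds on $c_K'$ and $c_K''$. The main obstacle is the bookkeeping in the second derivative, which splits into several pieces with mixed powers of $K\sin(\tau/(2K))$ in the denominator, but the uniform lower bound makes every such piece $K$-free, so no subtle cancellation is required; these computations are those of Aza\"{\i}s--Le\'{o}n \cite{cltazaisleon}, which we could alternatively cite wholesale.
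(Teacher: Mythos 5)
The paper does not actually prove this lemma: it is imported verbatim from Aza\"{i}s \& Le\'{o}n \cite[Eq.\ (2.5)--(2.8)]{cltazaisleon}, so there is no in-paper argument to compare against. Your proof is correct and is essentially the standard derivation underlying that reference: the closed form of the Dirichlet-type sum via the geometric series, the Riemann-sum interpretation of $c_K$ and of its first two derivatives for the uniform convergence in part (1) (indeed your argument gives uniformity on \emph{all} compacts, which is more than the lemma asserts), and the lower bound $K\sin(\tau/(2K))\ge\tau/\pi$ from $\sin x\ge 2x/\pi$ on $[0,\pi/2]$ for part (2). The only caveat is that your treatment of $c_K'$ and $c_K''$ is a sketch: if you carry out the differentiation of the closed form you get bounds of the shape $(1+O(1/K))\pi/\tau+\pi^2/(2\tau^2)$ rather than literally $\pi/\tau+\pi^2/(2\tau^2)$, so either absorb the excess using $\tau\le K\pi$ or state the bounds with an unspecified constant (as the lemma already does for $c_K''$); nothing in the paper uses the precise numerical constants, only the decay in $\tau$ uniformly in $K$.
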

We also need the following bounds.
\begin{lemma}\label{cotabaf}
The variances of $T$ and $ \widetilde T_K$ are bounded away from zero in $[0,K\pi]$, 
those of $\overline{T}^{\prime}$ and $\overline{T}^{\prime}_K$ are bounded away from zero 
in $[t_0,K\pi-t_0]$ for $t_0$ large enough. 
These bounds are uniform in $K$.
\end{lemma}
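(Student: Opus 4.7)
The plan is to reduce each of the four statements to explicit computations with $c_K$ and $\sinc$, and then to split into a ``large $t$'' regime handled by the $1/\tau$--decay of Lemma \ref{lema:cK} and a ``moderate $t$'' regime handled by the uniform convergence $c_K \to \sinc$ of the same lemma together with the non-degeneracy of the underlying Gaussian vectors.

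\emph{Variances of $T$ and $\widetilde T_K$.} Start from the closed forms $V^2(t) = \tfrac{1}{2}(1 + \sinc(2t))$ and $V_K^2(t) = \tfrac{1}{2}(1 + c_K(2t))$. A routine calculus estimate shows that $\sinc(x) \geq -c_0$ for some $c_0 < 1$ (the global minimum of $\sin(x)/x$ is approximately $-0.217$), so $V^2(t) \geq (1 - c_0)/2$ uniformly in $t$. For $V_K^2$, fix a large constant $M$: when $2t \geq M$ the bound $|c_K(\tau)| \leq \pi/\tau$ of Lemma \ref{lema:cK} gives $V_K^2(t) \geq (1 - \pi/M)/2$; when $2t \leq M$ the uniform convergence $c_K(2t) \to \sinc(2t)$ on $[0,M]$, combined with the lower bound just obtained for $V^2$, provides the same sort of control for all $K$ large enough. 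The finitely many remaining small $K$ are handled one by one, since for each of them $V_K^2$ is a continuous strictly positive function on the compact $[0, K\pi]$ --- strict positivity following from the Fej\'er-type closed form of $c_K$, which rules out $c_K(2t) = -1$.

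\emph{Derivative variances.} A short computation using $\overline T(t) = T(t)/V(t)$ and $\cov(T(t), T'(t)) = V(t)V'(t)$ gives
\[
\var\bigl(\overline T'(t)\bigr) \;=\; \frac{\var(T'(t)) - V'(t)^2}{V(t)^2},
\]
in which both $\var(T'(t))$ and $V'(t)^2$ are explicit functions of $\sinc(2t)$, $\sinc'(2t)$, $\sinc''(2t)$. Since all three tend to zero as $t \to \infty$, $\var(\overline T'(t)) \to 1/3$, and in particular $\var(\overline T'(t)) \geq 1/4$ for $t$ beyond some $M_1$. On the compact $[t_0, M_1]$, continuity plus the non-degeneracy of the Gaussian vector $(T(t), T'(t))$ for $t > 0$ --- which is immediate from the spectral representation $T(t) = \int_0^1 \cos(ut)\,dW(u)$ and the linear independence in $L^2([0,1])$ of $u \mapsto \cos(ut)$ and $u \mapsto u\sin(ut)$ --- gives a strictly positive lower bound. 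The analysis of $\overline T_K'$ is entirely parallel, with $c_K, c_K', c_K''$ in place of $\sinc$ and its derivatives and using both parts of Lemma \ref{lema:cK}; the symmetry $\widetilde T_K(K\pi - s) = \widetilde T_K(K\pi + s)$, which follows from $\cos\bigl(n(K\pi \pm s)/K\bigr) = (-1)^n \cos(ns/K)$, reduces the behaviour near $t = K\pi$ to that near $t = 0$. The main obstacle is patching the ``bulk'' estimate and the ``endpoint'' estimate into a single lower bound that is uniform in $K$: this is precisely where the twin statements of Lemma \ref{lema:cK} (uniform convergence on compacts together with the $1/\tau$--decay at infinity) are both essential.
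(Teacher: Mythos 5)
Your proof is correct and follows essentially the same route as the paper's: both arguments reduce the four claims to explicit formulas in $\sinc$, $c_K$ and their derivatives (your identity $\var(\overline T'(t)) = (\var(T'(t)) - V'(t)^2)/V(t)^2$ is exactly the paper's Equation \eqref{var:Tprime}), establish the lower bounds for the limit process by elementary calculus on $\sinc$, and transfer them to $\widetilde T_K$ via Lemma \ref{lema:cK}. Two remarks. First, your explicit use of the symmetry $\widetilde T_K(K\pi-s)=\widetilde T_K(K\pi+s)$ to reduce the right end of $[t_0,K\pi-t_0]$ to the left end is a genuine improvement over the paper: the paper's transfer step rests on $c_K^{(j)}(2s)=\sinc^{(j)}(2s)+O(1/K^{j})$, an approximation that silently degrades when $s$ is of order $K\pi$ (for instance $c_K(2K\pi)=1$ while $\sinc(2K\pi)=0$), and it is precisely the symmetry $c_K(2K\pi-\tau)=c_K(\tau)$ that repairs the argument there; your observation that the sign flips $a_n\mapsto(-1)^na_n$ preserve the law makes this clean. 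Second, a small slip: the claim that $V_K^2$ is strictly positive on all of $[0,K\pi]$ for every fixed $K$ fails for $K=1$, where $\widetilde T_1(t)=a_1\cos t$ has variance $\cos^2 t$, vanishing at $t=\pi/2$; the incompatibility argument for $\cos(n\tau/K)=-1$, $n=1,\dots,K$, requires $K\ge 2$. This is harmless, since the lemma is only used asymptotically and the paper's own proof likewise yields the bound only for $K$ large, but the sentence as written is false for $K=1$. The extra compact-interval non-degeneracy argument for $(T(t),T'(t))$ is sound but not needed, since the statement allows you to take $t_0$ as large as the tail estimate requires.
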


From Equations \eqref{cK}, \eqref{rK} and Lemma \ref{lema:cK} it follows that
\begin{equation}\label{limitsc}
r_K(s,t)\to r(s,t):=\frac{1}{2}(\sinc(\tau)+\sinc(\sigma)), 
\end{equation}
as $K\to\infty$ for all $s,t\in[0,K\pi]$ and that this convergence is 
uniform in off-diagonal compacts (compacts not containing points in the diagonal $s=t$). 
Furthermore, the order one and order two derivatives of $r_K$ converge in the same manner to 
the corresponding derivatives of $r$ respectively. 
Besides the following bounds hold 
for $s,t\in[0,K\pi]$ 
\begin{multline}\label{cota:supr}
|r_K(s,t)|\leq\frac{\pi}{\tau}+\frac{\pi}{\sigma},\quad
|\partial_{i}r_K(s,t)|\leq\frac{\pi}{2\tau}+\frac{\pi}{2\sigma}+\frac{\pi^2}{4\tau^2}+\frac{\pi^2}{4\sigma^2},\\
|\partial_{ij}r_K(s,t)|\leq c\left(\frac{1}{\tau}+\frac{1}{\sigma}+\frac{1}{\tau^2}+\frac{1}{\sigma^2}
+\frac{1}{\tau^3}+\frac{1}{\sigma^3}\right),
\end{multline}
where $i,j=s,t$ and $c$ is some constant; 
and
\begin{equation}\label{cota:supvar}
V^{2}_{K}(t)\leq\frac{1}{2}\left[1+\frac{\pi}{2t}\right].
\end{equation}

Finally, observe that for $t_0$ large enough and $s,t\in[t_0,K\pi-t_0]$,
$\overline{r}_K(s,t)$ and its derivatives converge to 
\begin{equation}\label{limitscbar}
\overline{r}(s,t)=\frac{\sinc(\tau)+\sinc(\sigma)}{\sqrt{1+\sinc(2s)}\sqrt{1+\sinc(2t)}},
\end{equation}
and its derivatives respectively uniformly in off-diagonal compacts. 
This is enough for our purposes. 
Therefore, $ \widetilde T_K$, $\overline{T}_K$ converge (in the described sense) to centered Gaussian processes $T$ and $\overline{T}$ 
on the positive real axis having, respectively, covariances given by \eqref{limitsc} and \eqref{limitscbar}.
In the sequel we need also  to renormalized $\overline{T}'_k$ in order  to set its varaince to 1. So we define the standard deviation of $\overline{T}^{\prime}_K(s)$ :
\begin{equation}\label{vK}
v_K(s):=\sqrt{\overline{r}^{(1,1)}_{K}(s,s)},
\end{equation}
where $  r_K^{(i,j)}(s,t) :=\frac{ \partial ^{i+j}}{\partial^i s \partial^j t } r_K$ 
and 
\begin{equation}\label{f:cal}
\mathcal T^{\prime}_K(s)=\overline{T}^{\prime}_K(s)/v_{K}(s).
\end{equation}

\begin{remark}
The role of the limit process in this work is secondary, 
compare with Aza\"{i}s \& Le\'{o}n \cite{cltazaisleon} and Granville \& Wigman \cite{granville}. 
\end{remark}
\noindent{\bf Chaining and Isonormal Process:} 
For the sake of readability, we write all the trigonometric polynomials on the same probability space. 
Note that, since we only care about distributions in the sequel, this is not really necessary.

Let $B=(B_\lambda)$ be a standard Brownian Motion defined on a probability space $(\Omega,{\cal F},\Prob)$. 
We assume that ${\cal F}$ is generated by $B$. 
By the isometric property of the stochastic integral, 
we have 
\begin{equation} \label{f:gam}
 \widetilde T_K(t)=\int^1_0\gamma_{K}(t,\lambda)dB_{\lambda},\quad
\gamma_K(t,\lambda)=\sum^{K}_{n=1}\cos\left(\frac{n}{K}t\right)\indicator_{\left[\frac{n-1}{K},\frac{n}{K}\right)}(\lambda) 
\end{equation}
being $\indicator_A$ the indicator (or characteristic) function of the set $A$. 
Thus, the processes $ \widetilde T_K:K=1,2,\dots$ are defined on the same probability space. 

Furthermore, 
let $\Hb$ be the Hilbert space $L^2([0,1],{\cal B},d\lambda)$, 
being ${\cal B}$ the Borel $\sigma$-algebra and $d\lambda$ the Lebesgue measure, with the usual inner 
product $\left\langle g,h\right\rangle=\int^1_0g(\lambda)h(\lambda)d\lambda$. 
Thus, the map
\begin{equation*}
h\mapsto B(h):=\int^1_0h(\lambda)dB_{\lambda},
\end{equation*}
defines an isometry between $\Hb$ and $L^2(\Omega)=L^2(\Omega,{\cal F},\Prob)$. 
In this situation, $(B(h):h\in\Hb)$ is called an isonormal process associated 
to $\Hb$.

In particular, we have
$$ \widetilde T_K(t)=B(\gamma_K(t,\cdot)),\,\overline{T}_K(t)=B(h_K(t,\cdot)), \mbox{ with } 
h_K(t,\cdot)=\gamma_K(t,\cdot)/V_K(t),$$ and $$\mathcal{T}^\prime_K(t):=B(h^\prime_K(t,\cdot)),\,
\mbox{ with } h^{\prime}_K(t,\cdot)=\partial_th_K(t,\cdot)/\|\partial_th_K(t,\cdot)\|_2.$$
In addition, $T(t)=B(\indicator_{[0,1]}(\cdot)\cos(t\cdot))$.\\

\noindent{\bf Chopping the extremes of the interval:}
The main idea of the proof of the CLT is to take advantage of the fact that the covariances of 
$\widetilde{T}_K$ and $ \widetilde X_K$ 
are very similar one to each other for large values of $s,t$, 
even $ \widetilde T_K$ being non-stationary. 
This idea is supported by the following lemma.

Denote $[0,K\pi]_{-{\alpha}}=[(K\pi)^{\alpha},K\pi-(K\pi)^{\alpha}]$ for $\alpha\in(0,1/2)$.
\begin{lemma}\label{lemma:truncar}
Let  $\alpha$  be any value, $0<\alpha<1/2$, (for example $\alpha =1/4$) , we have
\begin{equation*}
\frac{N_{\overline{T}_K}\left([0,K\pi]^{c}_{-{\alpha}}\right)-
\Esp\left(N_{\overline{T}_K}\left([0,K\pi]^{c}_{-{\alpha}}\right)\right)}{\sqrt{K\pi}}\longrightarrow 0 
\end{equation*}
in probability, as $K\to\infty$. 
\end{lemma}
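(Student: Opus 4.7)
The set $[0,K\pi]^c_{-\alpha}$ is the union of the two edge intervals $J_1 := [0,(K\pi)^\alpha]$ and $J_2 := [K\pi-(K\pi)^\alpha, K\pi]$, each of length $(K\pi)^\alpha$. By Chebyshev's inequality and the Cauchy--Schwarz bound on the covariance of $N_{\overline{T}_K}(J_1)$ and $N_{\overline{T}_K}(J_2)$, it is enough to show
\begin{equation*}
\var\bigl(N_{\overline{T}_K}(J_i)\bigr) = o(K\pi), \qquad i=1,2.
\end{equation*}
I would in fact prove the stronger bound $\var(N_{\overline{T}_K}(J_i)) = O((K\pi)^{2\alpha})$, which is $o(K\pi)$ precisely because $2\alpha<1$. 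By the symmetry $t\mapsto K\pi-t$ it is enough to treat $J := J_1$.

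The strategy is the classical Kac--Rice moment computation. For the first moment, Rice's formula gives
\begin{equation*}
\Esp[N_{\overline{T}_K}(J)] = \int_J \Esp\bigl[|\overline{T}_K'(t)| \mid \overline{T}_K(t) = 0\bigr]\,\frac{1}{\sqrt{2\pi}}\, dt.
\end{equation*}
The conditional expectation equals $\sqrt{2/\pi}$ times the conditional standard deviation of $\overline{T}_K'(t)$, which is bounded uniformly in $K$ and $t$ via the upper bounds \eqref{cota:supr} and the lower bound on $V_K^2$ in Lemma \ref{cotabaf}. Hence $\Esp[N_{\overline{T}_K}(J)] = O(|J|) = O((K\pi)^\alpha)$.

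For the second factorial moment, the second-order Rice formula yields
\begin{equation*}
\Esp[N(N-1)] = \int_{J\times J}\Esp\bigl[|\overline{T}_K'(s)\overline{T}_K'(t)| \mid \overline{T}_K(s) = \overline{T}_K(t) = 0\bigr]\, p_{s,t}(0,0)\, ds\, dt,
\end{equation*}
where $p_{s,t}$ denotes the joint density of $(\overline{T}_K(s),\overline{T}_K(t))$. On the region $\{(s,t)\in J\times J : |s-t|\geq 1\}$, the bounds \eqref{cota:supr} make $|\overline{r}_K(s,t)|$ bounded away from $1$ uniformly in $K$, so both $p_{s,t}(0,0)$ and the conditional expectation are uniformly bounded, contributing $O(|J|^2)$. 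On the diagonal strip $\{|s-t|<1\}$, the classical Kac--Rice argument shows that the integrand extends continuously across $s=t$: the $1/|t-s|$ singularity of $p_{s,t}(0,0)$ is balanced by the vanishing of the conditional expectation of $|\overline{T}_K'(s)\overline{T}_K'(t)|$, the cancellation being controlled uniformly in $K$ by the second-order Taylor expansion of $\overline r_K$ afforded by Lemma \ref{lema:cK}. Thus the strip contributes $O(|J|)$. Combining, $\Esp[N(N-1)] = O(|J|^2)$, so $\var(N_{\overline{T}_K}(J)) \leq \Esp[N(N-1)] + \Esp[N] = O((K\pi)^{2\alpha})$, and Chebyshev concludes.

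The main obstacle will be the uniform-in-$K$ control of the Kac--Rice integrand on the diagonal strip, which is complicated by two non-stationary features: (i) the derivative variance is only known to be bounded below on $[t_0, K\pi-t_0]$ by Lemma \ref{cotabaf}, which is not all of $J$, so one must either reason separately on the fixed compact $[0,t_0]$ using the smoothness of $\widetilde T_K$ there, or verify directly that the Kac--Rice extension across the diagonal survives the degeneration of the derivative variance; and (ii) the non-stationary factor $1/\sqrt{(1+c_K(2s))(1+c_K(2t))}$ in \eqref{barr} introduces extra derivative terms whose uniform domination relies precisely on the estimates of Lemma \ref{lema:cK}.
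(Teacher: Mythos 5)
Your overall plan (Chebyshev plus first and second Kac--Rice moments on each edge interval) is viable in principle, but it is a genuinely different and considerably heavier route than the one the paper takes. The paper never touches the second moment here: it bounds $\Esp\left|N-\Esp (N)\right|\leq 2\Esp(N)$ and applies Markov's inequality, so that the whole lemma reduces to showing
\begin{equation*}
\Esp\left(N_{\overline{T}_K}([0,(K\pi)^{\alpha}])\right)=O\left((K\pi)^{\alpha}\right)=o(\sqrt{K\pi}),
\end{equation*}
which is exactly your first-moment computation (first-order Rice formula, with the integrand shown to be uniformly bounded in $K$ via Lemmas \ref{lema:cK} and \ref{cotabaf} and a Taylor expansion of $c_K$ at $0$). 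The hypothesis $\alpha<1/2$ enters only through $(K\pi)^{\alpha}=o(\sqrt{K\pi})$, not through $2\alpha<1$. What the $L^1$/Markov trick buys is precisely the avoidance of the step you yourself identify as ``the main obstacle.''

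That obstacle is real and, as written, unresolved, so your argument has a gap. On $J_1=[0,(K\pi)^{\alpha}]$ the corner $s=t=0$ is degenerate in a way that breaks the ``classical'' diagonal cancellation: since $\widetilde T_K'(0)=0$ and $c_K'(0)=0$, one has $\overline{T}_K'(0)=0$ almost surely, so the derivative variance vanishes at $t=0$ and Lemma \ref{cotabaf} only provides a lower bound on $[t_0,K\pi-t_0]$. Near $(0,0)$ the conditional law of $(\overline{T}_K'(s),\overline{T}_K'(t))$ given $\overline{T}_K(s)=\overline{T}_K(t)=0$ must therefore be controlled by a separate argument --- for instance the Dudley/higher-moment bound of Theorem 3.6 of \cite{aw} on the fixed compact $[0,t_0]$, as in the proof of Lemma \ref{cota:vardiag}, which only requires non-degeneracy of $\overline{T}_K(t)$ itself and not of its derivative. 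You flag this difficulty but do not carry it out, so the claimed bound $\var(N_{\overline{T}_K}(J_i))=O((K\pi)^{2\alpha})$ is not established. If you want to keep your structure, complete that uniform diagonal estimate; otherwise, replace Chebyshev by Markov applied to $|N-\Esp(N)|$ and delete the second-moment computation entirely.
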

\
Then, it suffices to prove that 
\begin{equation*}
\frac{N_{\overline{T}_K}\left([0,K\pi]_{-\alpha}\right)-\Esp\left(N_{\overline{T}_K}\left([0,K\pi]_{-\alpha}\right)\right)}
{\sqrt{K\pi}}\Rightarrow N(0,V^2).
\end{equation*}

\noindent{\bf Wiener Chaos decomposition:} 
Following Kratz \& Le\'{o}n \cite{kratz-leon-97} 
we can establish the expansion of the number of roots in the Wiener Chaos.\\

We need some facts about Hermite Polynomials, Multiple Wiener-It\^{o} Integrals 
and the Chaotic Expansions, 
see Peccati \& Taqqu \cite{taqqu-peccati} and Hiu-Hsiung Kuo \cite{hiu} for details on 
the definitions and the results listed below. 

The Hermite polynomials $H_q$ are defined by 
\begin{equation*}
H_{q}(x)=(-1)^{q}e^{x^2/2}\left.\frac{d^{q}}{dt^{q}}e^{-t^2/2}\right|_{t=x}.
\end{equation*}
The family $(H_q:q\geq 0)$ is a complete orthogonal system in $L^2(\R,\varphi(dx))$ 
with $\|H_q\|^2_2=q!$. Here $\varphi$ stands for the standard Gaussian density function. 
Besides, for $q\geq 1$, the $q$-fold 
multiple Wiener-It\^{o} integral w.r.t $B$, $I^{B}_q$, can be introduced  
as the linear isometry, 
between the symmetric tensor product $L^2_s([0,1]^{q})=\Hb^{\odot q}$ 
with the norm $\sqrt{q!}\|\cdot\|_{L^2([0,1]^q)}$ and its image in $L^2(\Omega)$, 
induced by
\begin{equation*}\label{eq:hqI}
I^{B}_q(h^{\otimes q})=H_q(B(h)).
\end{equation*}
for $h\in\Hb$, with unit norm and $h^{\otimes q}(\lambda_1,\dots,\lambda_q)=\prod^{q}_{k=1}h(\lambda_k)$. 

For $q\geq 1$, the $q$-th Wiener Chaos ${\cal H}_q$ is defined as the image of $I^B_q$. 
Furthermore, denoting the set of constants by ${\cal H}_0$, 
we have
\begin{equation*}
L^2(\Omega,{\cal F},\Prob)=\bigoplus^{\infty}_{q=0}{\cal H}_q,
\end{equation*}
where $\oplus$ indicates an orthogonal sum. 
In other words, 
for any square integrable functional, $F\in L^2(\Omega,{\cal F},\Prob)$, 
of the Brownian motion $B$, 
there exists a unique sequence of symmetric functionals $(f_q:q\geq 1)$ 
with $f_q\in L^{2}([0,1]^{q})$, such that
\begin{equation*}\label{eq:hermexp}
F-\Esp(F)=\sum^{\infty}_{q=1}I^{B}_{q}(f_q),
\end{equation*}
where the equality holds in the $L^2$ sense.\\

We now write the Wiener Chaos expansion for the number of crossings.
We need some notations. 
Denote 
\begin{equation} \label{f:akbk}
a_{2\ell}=\sqrt{\frac{2}{\pi}}\frac{(-1)^{\ell+1}}{2^{\ell}\ell!(2\ell-1)},
\quad
b_{k}=
\begin{cases}
(-1)^{k/2}(k-1)!!,&\textrm{ if }k\textrm{ is even}\\
0,&\textrm{ if }k\textrm{ is odd} 
\end{cases}
\end{equation}
where $(2n-1)!!=\prod^{n}_{j=1}(2j-1)$;
\begin{equation} \label{f:fq}
f_q(x,y)=\sum^{\left\lfloor q/2\right\rfloor}_{\ell=0}
b_{q-2\ell}a_{2\ell}H_{q-2\ell}(x)H_{2\ell}(y),
\end{equation}
\begin{equation} \label{f:iqtk}
I^{\overline{T}_K}_{q}\left([0,K\pi]_{-{\alpha}}\right)=
\frac{1}{\sqrt{K\pi}}
\int_{[0,K\pi]_{-{\alpha}}}f_{q}\left(\overline{T}_K(s),\mathcal {T}^{\prime}_K(s)\right)v_K(s)ds
\end{equation}

\begin{theorem}\label{theorem:expansion}
The following expansion holds in $L^2$
\begin{equation*}
\frac{N_{\overline{T}_K}\left([0,K\pi]_{-{\alpha}}\right)-\Esp\left(N_{\overline{T}_K}\left([0,K\pi]_{-{\alpha}}\right)\right)}{\sqrt{K\pi}}=
\sum^{\infty}_{q=1}I^{\overline{T}_K}_{q}\left([0,K\pi]_{-{\alpha}}\right).
\end{equation*}
In particular, since $b_1=0$, $I^{\overline{T}_K}_1([0,K\pi]_{-\alpha})=0$ for all $K$.
\end{theorem}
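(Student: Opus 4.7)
The plan is to apply the Kratz-Le\'on chaos-decomposition method for level crossings, adapted to the non-stationary but uniformly non-degenerate process $\overline{T}_K$. Start from the standard level-set approximation of the Kac-Rice counting formula: for $\eps>0$, set $\psi_\eps(x)=(2\eps)^{-1}\indicator_{(-\eps,\eps)}(x)$ and
\begin{equation*}
N^{\eps}_{\overline{T}_K}(I):=\int_I \psi_\eps(\overline{T}_K(s))\,|\overline{T}'_K(s)|\,ds
=\int_I \psi_\eps(\overline{T}_K(s))\,v_K(s)\,|\mathcal T'_K(s)|\,ds,
\end{equation*}
with $I=[0,K\pi]_{-\alpha}$. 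Since $\overline{T}_K$ has no multiple zeros almost surely and the Gaussian pair $(\overline{T}_K(s),\overline{T}'_K(s))$ is uniformly non-degenerate on $I$ by Lemma \ref{cotabaf}, a routine second-moment argument based on Rice's formula gives $N^{\eps}_{\overline{T}_K}(I)\to N_{\overline{T}_K}(I)$ in $L^2(\Omega)$ as $\eps\to 0$. It therefore suffices to write the chaos expansion of $N^\eps_{\overline{T}_K}(I)$ and pass to the limit in $\eps$.

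At each fixed $s\in I$, differentiation of $\Esp[\overline{T}_K(s)^2]=1$ yields $\Esp[\overline{T}_K(s)\overline{T}'_K(s)]=0$, so $\overline{T}_K(s)$ and $\mathcal T'_K(s)$ are independent standard Gaussians. Expanding the two factors in Hermite series,
\begin{equation*}
|y|=\sum_{\ell\geq 0}a_{2\ell}H_{2\ell}(y),\qquad
\psi_\eps(x)=\sum_{j\geq 0}c^{(\eps)}_j H_j(x),
\end{equation*}
with $c^{(\eps)}_j\to H_j(0)\varphi(0)/j!$ as $\eps\to 0$ (and $H_j(0)=0$ for odd $j$), multiplying the two series, exchanging summation with the integration over $s$, and grouping terms of total Hermite order $q=j+2\ell$ gives, for every $\eps$,
\begin{equation*}
\frac{N^\eps_{\overline{T}_K}(I)-\Esp\, N^\eps_{\overline{T}_K}(I)}{\sqrt{K\pi}}=\sum_{q\geq 1}I^{\overline{T}_K,\eps}_q(I),
\end{equation*}
where $I^{\overline{T}_K,\eps}_q(I)$ has the structure of \eqref{f:iqtk} with $b_{q-2\ell}$ replaced by the $\eps$-dependent analogue of $c^{(\eps)}_{q-2\ell}$ (the scalar factors $\varphi(0)/(q-2\ell)!$ being absorbed in the normalisation of $f_q$ fixed in \eqref{f:fq}). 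The $q=1$ contribution reduces to $j=1,\ \ell=0$ and vanishes because $b_1=H_1(0)=0$.

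The main technical obstacle is to justify the termwise limit $I^{\overline{T}_K,\eps}_q(I)\to I^{\overline{T}_K}_q(I)$ in $L^2(\Omega)$ and, crucially, to interchange this limit with the infinite sum over $q$. Termwise convergence follows from the scalar convergence $c^{(\eps)}_j\to H_j(0)\varphi(0)/j!$ combined with the second-moment identity
\begin{equation*}
\Esp\bigl[I^{\overline{T}_K,\eps}_q(I)^2\bigr]=\frac{1}{K\pi}\int_{I^2} v_K(s)v_K(t)\,\Esp\bigl[f^{(\eps)}_q(\overline{T}_K(s),\mathcal T'_K(s))\,f^{(\eps)}_q(\overline{T}_K(t),\mathcal T'_K(t))\bigr]\,ds\,dt,
\end{equation*}
which, by Mehler's formula, reduces to sums of products of powers of $\overline{r}_K$, $\overline{r}_K^{(1,0)}$ and $\overline{r}_K^{(1,1)}$, and is controlled uniformly in $\eps$ via the decay bounds \eqref{cota:supr} and Lemma \ref{cotabaf}. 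The tail $\sum_{q>Q}\Esp[I^{\overline{T}_K,\eps}_q(I)^2]$ is handled by Pythagoras in Wiener chaos: it equals $\var(N^\eps_{\overline{T}_K}(I))/(K\pi)$ minus its first $Q$ chaotic projections, and $\var(N^\eps_{\overline{T}_K}(I))$ is bounded uniformly in $\eps$ thanks to the Kac-Rice second-moment formula applied on the uniformly non-degenerate interval $I$. This makes the swap of the limit $\eps\to 0$ with the infinite sum legitimate and completes the proof.
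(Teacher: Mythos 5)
Your argument follows essentially the same route as the paper's proof: approximate the Dirac mass in the Kac counting formula by a kernel (you use the indicator kernel $\psi_\eps$, the paper uses the Gaussian densities $\varphi_\eta$), expand in Hermite polynomials using the pointwise orthogonality of $\overline{T}_K(s)$ and $\mathcal T^{\prime}_K(s)$, and pass to the $L^2$ limit termwise plus a tail estimate. The only loose point is the last step: uniform boundedness of $\var\bigl(N^{\eps}_{\overline{T}_K}(I)\bigr)$ in $\eps$ does not by itself make the chaotic tails uniformly small --- you need, as the paper does, the convergence $\Esp\bigl[(N^{\eps}_{\overline{T}_K}(I))^2\bigr]\to\Esp\bigl[(N_{\overline{T}_K}(I))^2\bigr]$ together with the termwise convergence of the projections, but since your opening $L^2$-convergence claim already supplies this, the proof is sound and matches the paper's.
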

The following two lemmas are the tools for proving the above theorem. The first one shows that the second moment of the crossings of zero of $\overline{T}_K$ is finite.
\begin{lemma}\label{cota:vardiag}
There exists $t_0>0$ such that, 
for all $a>0$, the variances of $N_{\overline{T}_K}([t,t+a])/\sqrt{K\pi}$ remain 
bounded uniformly on $K$ for all $t\in[t_0,K\pi-t_0]$.
\end{lemma}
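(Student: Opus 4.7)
The plan is to apply the Kac-Rice formula of order two to the second factorial moment of $N_{\overline{T}_K}([t,t+a])$, and to show that the resulting Kac-Rice density is bounded uniformly in $K$ and in the position of the interval. Writing
\begin{equation*}
\Esp\bigl[N_{\overline{T}_K}([t,t+a])(N_{\overline{T}_K}([t,t+a])-1)\bigr] = \int_{[t,t+a]^2} A_{2,K}(s,u)\,ds\,du,
\end{equation*}
with
\begin{equation*}
A_{2,K}(s,u) = p_{\overline{T}_K(s),\overline{T}_K(u)}(0,0)\cdot \Esp\bigl[|\overline{T}_K'(s)\overline{T}_K'(u)| \bigm| \overline{T}_K(s)=\overline{T}_K(u)=0\bigr],
\end{equation*}
the problem reduces to showing $A_{2,K}(s,u) \le C$ for $s,u \in [t_0, K\pi-t_0]$ with $C$ independent of $K$.

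I would split the integration domain into a near-diagonal strip $\{|s-u|<\delta\}$ and its complement. On the far piece, the covariance decay estimates \eqref{cota:supr} (together with \eqref{barr} and Lemma \ref{cotabaf}, which keep $V_K^2(s)$ bounded away from zero) give $|\overline{r}_K(s,u)|<1-\eta$ for some $\eta>0$ once $\delta$ is large enough, so $p_{\overline{T}_K(s),\overline{T}_K(u)}(0,0)$ is bounded by a constant. The conditional expectation of $|\overline{T}_K'(s)\overline{T}_K'(u)|$ is controlled by a Cauchy-Schwarz argument using the uniform boundedness of $\var(\overline{T}_K'(s))$, which in turn follows from \eqref{cota:supr} and Lemma \ref{cotabaf}. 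This yields a uniform bound on $A_{2,K}$ over the far region.

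The main obstacle is the near-diagonal piece, where the joint law of $(\overline{T}_K(s),\overline{T}_K(u))$ degenerates as $u\to s$. Here I would invoke a standard Geman-type argument: a first order Taylor expansion $\overline{T}_K(u)-\overline{T}_K(s)=(u-s)\overline{T}_K'(\xi)$ shows that conditioning on $\overline{T}_K(s)=\overline{T}_K(u)=0$ forces $\overline{T}_K'(s)$ to be of order $|u-s|$. Making this quantitative by replacing $(\overline{T}_K(s),\overline{T}_K(u))$ with the equivalent non-degenerate pair $(\overline{T}_K(s),(\overline{T}_K(u)-\overline{T}_K(s))/(u-s))$ and using the regression formulas, together with the uniform lower bound of $\var(\overline{T}_K'(s))$ from Lemma \ref{cotabaf} and the uniform estimates on $\partial_{ij}\overline{r}_K$ inherited from \eqref{cota:supr}, produces $A_{2,K}(s,u) \le C$ uniformly in $K$ and in $s,u$ with $|s-u|<\delta$.

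Combining both pieces gives $\Esp[N_{\overline{T}_K}([t,t+a])^2] \le C(a^2+a)$ uniformly in $K$ and in $t\in[t_0,K\pi-t_0]$, since the first moment bound $\Esp[N_{\overline{T}_K}([t,t+a])]\le Ca$ follows from the first order Rice formula together with the same estimates. Dividing by $K\pi$ yields the stated uniform boundedness. These Kac-Rice density estimates are analogous to those used in Kratz \& Le\'on \cite{kratz-leon-97} and Aza\"{\i}s \& Le\'{o}n \cite{cltazaisleon}; the key new point is that the uniform control of $\overline{r}_K$ and its derivatives inside $[t_0,K\pi-t_0]$ is enough to transfer the stationary bounds to the present non-stationary setting.
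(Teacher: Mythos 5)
Your argument is sound, but it is not the route the paper takes. You control the second factorial moment through the two--point Kac--Rice density $A_{2,K}(s,u)$, splitting into an off--diagonal region (handled by the covariance decay \eqref{cota:supr} and non-degeneracy) and a near--diagonal region (handled by the Geman/divided--difference device, replacing $(\overline{T}_K(s),\overline{T}_K(u))$ by $(\overline{T}_K(s),(\overline{T}_K(u)-\overline{T}_K(s))/(u-s))$). The paper sidesteps the two--point density entirely: it applies the general moment bound of Aza\"{\i}s \& Wschebor (Theorem 3.6 of \cite{aw} with $m=2$, $p=5$), which dominates $\Esp\bigl[(N_{\overline{T}_K}[t_0,t_0+a])^2\bigr]$ by a constant plus a uniform bound on the one--dimensional density of $\overline{T}_K(s)$ plus $\Esp\|\overline{T}_K^{(6)}\|_{\infty}$; this last quantity is bounded uniformly in $K$ via Dudley's entropy bound, since the canonical distance of $\overline{T}_K^{(6)}$ satisfies $d_K(s,t)\le C|s-t|$ because all derivatives of $\overline{r}_K$ are uniformly bounded on $[t_0,K\pi-t_0]$. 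The paper's route thus trades your diagonal analysis for smoothness: it needs only one--point non-degeneracy (Lemma \ref{cotabaf}) and uniform bounds on high-order derivatives of $c_K$, at the price of invoking a less elementary black box; your route is more self-contained and yields the sharper local information $A_{2,K}(s,u)\le C$. One point you should make explicit if you pursue your version: uniform-in-$K$ non-degeneracy of $(\overline{T}_K(s),\overline{T}_K(u))$ for intermediate separations $\delta\le|u-s|\le a$. The bounds \eqref{cota:supr} only force $|\overline{r}_K(s,u)|<1$ when $|u-s|$ is large, so for moderate $|u-s|$ you must appeal to the uniform convergence $c_K\to\sinc$ on compacts (Lemma \ref{lema:cK}) together with the strict inequality $|\overline{r}(s,u)|<1$ off the diagonal. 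With that supplied, your proof is complete and equivalent in strength to the paper's.
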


\begin{lemma}\label{lemma:chaos}
1. $\Esp\left((N_{\overline{T}_K}([0,K\pi]_{-{\alpha}}))^2\right)<\infty$.\\
2. Denote by $N_{\overline{T}_K}(u,I)$ the number of crossings through the level $u$ by $\overline{T}_K$ in the interval $I$, then 
$\Esp\left(N^2_{\overline{T}_K}(u,I)\right)$ is continuous in $u$.\\
3. Define 
\begin{equation}\label{Nsigma}
N^{\eta}_{\overline{T}_K}:=\int_{[0,K\pi]_{-\alpha}}\varphi_{\eta}(\overline{T}_K(s))
\left|\mathcal {T}^{\prime}_K(s)\right|v_K(s)ds,
\end{equation}
where $\varphi_\eta$ is the density of the  $N(0,\eta)$ distribution.
Then, $N^{\eta}_{\overline{T}_K}$ converges almost surely and in $L^2$ to $N_{\overline{T}_K}([0,K\pi]_{-{\alpha}})$ 
and $\Esp(N^{\eta}_{\overline{T}_K})^2\to\Esp(N_{\overline{T}_K}([0,K\pi]_{-{\alpha}}))^2$.\\
4. The random variable $N^{\eta}_{\overline{T}_K}$ has the chaotic expansion
\begin{equation*}
N^{\eta}_{\overline{T}_K}=\sum^{\infty}_{q=0}\sum^{\left\lfloor q/2\right\rfloor}_{\ell=0}
b^{\eta}_{q-2\ell}a_{2\ell}\int_{[0,K\pi]_{-\alpha}}H_{q-2\ell}\left(\overline{T}_K(s)\right)
H_{2\ell}\left(\mathcal {T}^{\prime}_K(s)\right)v_K(s)ds 
\end{equation*}
where $b^{\eta}_k$ are the Hermite coefficients of $\varphi_{\eta}$.
\end{lemma}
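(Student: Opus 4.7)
The overall strategy is to follow the route of Kratz-Le\'{o}n \cite{kratz-leon-97}, adapted to the non-stationary process $\overline{T}_K$ on the truncated interval $[0,K\pi]_{-\alpha}$ where Lemma \ref{cotabaf} guarantees uniform non-degeneracy of the pair $(\overline{T}_K(s),\overline{T}'_K(s))$.

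For item 1, the natural tool is Rice's formula for the second factorial moment,
\begin{equation*}
\Esp[N_{\overline{T}_K}(I)(N_{\overline{T}_K}(I)-1)] = \iint_{I\times I} \Esp[\,|\overline{T}'_K(s)\overline{T}'_K(t)| \mid \overline{T}_K(s)=\overline{T}_K(t)=0\,]\,p_{s,t}(0,0)\,ds\,dt.
\end{equation*}
Splitting the domain into the diagonal strip $\{|t-s|\leq\delta\}$ and its complement, the far part is controlled by the bounds \eqref{cota:supr} on $r_K$ and the uniform non-degeneracy, while the near part is handled by a Taylor expansion of $\overline{r}_K$ around $s=t$, which makes the Rice integrand vanish like a power of $|t-s|$ near the diagonal. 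Equivalently, one may partition $[0,K\pi]_{-\alpha}$ into unit intervals, apply Lemma \ref{cota:vardiag} on each together with Rice's first moment formula, and sum via Cauchy-Schwarz. Item 2 then follows from the level-$u$ analogue of the above identity and dominated convergence: the integrand is jointly continuous in $u$, and the bound from item 1 furnishes a uniform integrable majorant in a neighbourhood of any given level.

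For item 3, the almost sure convergence $N^{\eta}_{\overline{T}_K}\to N_{\overline{T}_K}([0,K\pi]_{-\alpha})$ is the classical kinematic identity applied path by path: on the full-probability event that $\overline{T}_K$ has only simple zeros in the closed interval and does not vanish at the endpoints, the integrand $\varphi_\eta(\overline{T}_K(s))|\overline{T}'_K(s)|$ concentrates, as $\eta\to 0$, into a unit mass at each zero. The $L^2$ convergence then reduces to the convergence of second moments; expressing $\Esp((N^\eta_{\overline{T}_K})^2)$ as a Gaussian double integral against a mollifier of $\delta_0\otimes\delta_0$ and letting $\eta\to 0$ produces exactly the Rice double integral of item 1, with the diagonal contribution vanishing as $\eta\to 0$. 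Combined with the a.s.\ convergence and Fatou's lemma this upgrades to $L^2$ convergence.

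For item 4, the key observation is that $\overline{T}_K(s)$ and $\mathcal{T}'_K(s)$ are \emph{independent} standard Gaussians at each fixed $s$: since $\overline{T}_K$ has constant unit variance, differentiating $\Esp(\overline{T}_K(s)^2)=1$ gives $\Esp(\overline{T}_K(s)\overline{T}'_K(s))=0$, and jointly Gaussian uncorrelated random variables are independent. Expanding $\varphi_\eta(x)$ and $|y|$ in Hermite polynomials with coefficients $b^{\eta}_k$ and $a_{2\ell}$ respectively, this independence allows us to multiply the two expansions term by term inside the integral, reading off the stated chaotic decomposition; convergence of the resulting series in $L^2(\Omega)$ is inherited from the $L^2$ bound in item 3 via Parseval's identity. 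The main obstacle is really the $L^2$ convergence in item 3: one needs a bound on $\Esp((N^{\eta}_{\overline{T}_K})^2)$ that is uniform in $\eta>0$, and this bound depends crucially on the truncation to $[0,K\pi]_{-\alpha}$ so that Lemma \ref{cotabaf} keeps the joint law of $(\overline{T}_K,\overline{T}'_K)$ uniformly away from degeneracy.
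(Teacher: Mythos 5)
Your items 1 and 4 follow essentially the paper's route: item 1 via the second factorial moment Rice formula or, as you note in the alternative, via Lemma \ref{cota:vardiag} on a partition plus Cauchy--Schwarz (this alternative is what the paper does); item 4 via the orthogonality of $\overline{T}_K(s)$ and $\mathcal{T}^{\prime}_K(s)$ and term-by-term multiplication of the Hermite expansions of $\varphi_\eta$ and $|\cdot|$, as in Kratz--Le\'on. Your item 2 takes a heavier route than the paper, which simply bounds $N_{\overline{T}_K}(u)\le N_{\overline{T}^{\prime}_K}(0)+1$ by Rolle's theorem, observes that $u\mapsto N_{\overline{T}_K}(u)$ is a.s.\ locally constant, and applies dominated convergence; your continuity-plus-domination argument on the Rice integrand can be made to work, but you would have to supply a majorant near the diagonal that is uniform in $u$, which is not automatic from item 1.

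The genuine gap is in item 3. You write $\Esp\bigl((N^{\eta}_{\overline{T}_K})^2\bigr)$ as a Gaussian double integral against a mollifier of $\delta_0\otimes\delta_0$ and claim that, as $\eta\to0$, it converges to ``the Rice double integral of item 1'' with ``the diagonal contribution vanishing''. That cannot be right: the off-diagonal part converges to the second \emph{factorial} moment $\Esp[N(N-1)]$, while the diagonal contribution converges to $\Esp[N]$ and not to $0$ (each spike of $\varphi_\eta(\overline{T}_K(s))|\mathcal{T}^{\prime}_K(s)|v_K(s)$ paired with itself carries mass approximately $1$). If the diagonal contribution vanished you would obtain $\lim_{\eta}\Esp[(N^{\eta})^2]=\Esp[N(N-1)]<\Esp[N^2]$, contradicting the Fatou inequality you invoke in the same sentence. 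Making the diagonal contribution precise is exactly the delicate point, and the paper avoids it entirely: it uses the area formula to write $N^{\eta}_{\overline{T}_K}=\int N_{\overline{T}_K}(u)\varphi_\eta(u)\,du$, applies Jensen's inequality to get $\Esp[(N^{\eta})^2]\le\int\Esp[N^2_{\overline{T}_K}(u)]\varphi_\eta(u)\,du$, and then uses the continuity in $u$ from item 2 to match the Fatou lower bound. This is also why item 2 is in the lemma at all --- a dependence your proposal never uses. You should either adopt that route or replace the ``vanishing diagonal'' claim by an actual estimate showing the diagonal part tends to $\Esp[N]$.
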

\begin{proof}We will prove now the Theorem \ref{theorem:expansion}.
For ease of notation we will drop for a while the interval dependence in the crossings random variable. 

Since the zeros are isolated, 
formally, we can write the  Kac Formula:
\begin{equation*}\label{kac:chaos}
N_{\widetilde{T}_K}=N_{\overline{T}_K} =  N_{\overline{T}_K} ([0,K\pi]_{-{\alpha}})  =\int_{[0,K\pi]_{-{\alpha}}}\delta_{0}\left(\overline{T}_K(s)\right)
\left|\mathcal {T}^{\prime}_K(s)\right|v_K(s)ds 
\end{equation*}
In order to give it a precise meaning, 
we approximate the delta function at $0$ by Gaussian kernels $\varphi_{\eta}$.
Consider the random variables $N^{\eta}_{\overline{T}_K}$.  
By Part 4 of Lemma \ref{lemma:chaos}
\begin{equation}\label{eq:nsigmachaos}
N^{\eta}_{\overline{T}_K}=\sum^{\infty}_{q=0}\sum^{\left\lfloor q/2\right\rfloor}_{\ell=0}
b^{\eta}_{q-2\ell}a_{2\ell}\int_{[0,K\pi]_{-\alpha}}H_{q-2\ell}\left(\overline{T}_K(s)\right)
H_{2\ell}\left(\mathcal {T}^{\prime}_K(s)\right)v_K(s)ds.
\end{equation}
Now, the idea is to pass to the limit this expansion as $\eta\to 0$ 
in order to obtain the expansion for $N_{\overline{T}_K}$.

First, observe that $b^{\eta}_{k}\to b_{k}$ (non-random), and that this is the only ingredient depending on $\eta$. 
Hence $N^{\eta}_{\overline{T}_K}\to N_{\overline{T}_K}$ almost surely. 
Now consider the sum of the first $Q$ terms in the r.h.s. of \eqref{eq:nsigmachaos}. 
By Fatou's Lemma we have
\begin{multline*}
\sum^{Q}_{q=0}\Esp\left(\left[\sum^{\left\lfloor q/2\right\rfloor}_{\ell=0}
b_{q-2\ell}a_{2\ell}\int_{[0,K\pi]_{-\alpha}}H_{q-2\ell}(\overline{T}_K(s))
H_{2\ell}(\mathcal {T}^{\prime}_K(s))v_K(s)ds\right]^2\right)\\
\leq\liminf_{\eta\to 0}\Esp\left(\left[N_{\overline{T}_K}^{\eta}\right]^2\right)=\Esp\left(N_{\overline{T}_K}^2\right).
\end{multline*}
In the last equality we used Part 3 of Lemma \ref{lemma:chaos}. 
Therefore, the right hand side of (\ref{eq:nsigmachaos}) has a limit, say ${\cal N}$, in $L^2$ (with $b_{q-2\ell}$ instead of $b^{\eta}_{q-2\ell}$). 
It remains to show that this limit is, effectively, $N_{\overline{T}_K}$.
The result follows writing
\begin{equation*}
\|N_{\overline{T}_K}-{\cal N}\|^{2}_{L^2}\leq 2\left[
\|N_{\overline{T}_K}-N^{\eta}_{\overline{T}_K}\|^{2}_{L^2}+\|N^{\eta}_{\overline{T}_K}-{\cal N}\|^{2}_{L^2}\right].
\end{equation*}
The first term in the right hand side tends to zero by Part 3 of Lemma \ref{lemma:chaos}. 
To show that the second term tends to zero, consider its chaotic expansion
\begin{equation*}
N^{\eta}_{\overline{T}_K}-{\cal N}=\sum^{\infty}_{q=0}\sum^{\left\lfloor q/2\right\rfloor}_{\ell=0}
\left(b^{\eta}_{q-2\ell}-b_{q-2\ell}\right)
a_{2\ell}\,J_q,
\end{equation*}
where we denote 
\begin{equation}\label{f:jq} 
J_q=\int_{[0,K\pi]_{-\alpha}}H_{q-2\ell}(\overline{T}_K(s))
H_{2\ell}(\mathcal {T}^{\prime}_K(s))v_K(s)ds.
\end{equation}
Note that $J_q$ does not depend on $\eta$. 
Then, for each $Q$ we have
\begin{multline*}
\|N^{\eta}_{\overline{T}_K}-{\cal N}\|^{2}_{L^2}
\leq 3\left[\sum^{\infty}_{q=Q+1}\Esp\left(\left[\sum^{\left\lfloor q/2\right\rfloor}_{\ell=0}b_{q-2\ell}a_{2\ell}J_q\right]^2\right)\right.\\
\left.
+\sum^{Q}_{q=0}\Esp\left(\left[\sum^{\left\lfloor q/2\right\rfloor}_{\ell=0}(b^{\eta}_{q-2\ell}-b_{q-2\ell})a_{2\ell}J_q\right]^2\right)+\sum^{\infty}_{q=Q+1}\Esp\left(\left[\sum^{\left\lfloor q/2\right\rfloor}_{\ell=0}b^{\eta}_{q-2\ell}a_{2\ell}J_q\right]^2\right)
\right] .
\end{multline*}
Now, take limit as $\eta\to 0$, 
the first term does not depend on $\eta$; 
the second one tends to $0$ since it is a finite sum and $b^{\eta}_{q-2\ell}\to_{\eta}b_{q-2\ell}$; 
the third term is $\|P_Q(N^{\eta}_{\overline{T}_K})\|^{2}$, 
where $P_Q$ is the orthogonal projection of the $L^2$ random variable $N^{\eta}_{\overline{T}_K}$ 
on the subspace $\oplus^{\infty}_{Q+1}{\cal H}_q$, 
thus by Part 3 of Lemma \ref{lemma:chaos} it converges with $\eta\to 0$ to $\|P_Q(N_{\overline{T}_K)}\|^{2}$ and tends to $0$ when $Q\to\infty$.

This proves the theorem.
\end{proof}

\begin{remark}\label{remark:g-q}
As shown in the chaining paragraph, $\overline{T}_K(t)$ and $\mathcal {T}^{\prime}_K(t)$ 
can be written as $B(h_K(t,\cdot))$ and $B(h^\prime_K(t,\cdot))$ for $h(t,\cdot),h^\prime(t,\cdot)\in\Hb$. 
Since $\overline{T}_K(t)$ and $\mathcal {T}^{\prime}_K(t)$ are orthogonal,  
so are by the isometry $h(t,\cdot),$ and $h^\prime(t,\cdot)$, then, 
using the multiplication formula \cite[Eq. 6.4.17]{taqqu-peccati} we get.  
\begin{align*}
H_{q-2\ell}\left(\overline{T}_K(t)\right)H_{2\ell}\left(\mathcal {T}^{\prime}_K(t)\right)
&=H_{q-2\ell}(B(h_K(t,\cdot)))H_{2\ell}(B(h^\prime_K(t,\cdot)))\\
&=I^B_q\left(h^{\otimes q-2\ell}_K(t,\cdot)\otimes h^{\prime\,\otimes 2\ell}_K(t,\cdot)\right)
\end{align*}
Therefore, by the Stochastic Fubini's Theorem, see Peccati \& Taqqu \cite[Section 5.13]{taqqu-peccati}, 
we have
$I^{\overline{T}_K}_q([0,K\pi]_{-\alpha})=I^{B}_{q}(g_q)$ for $g_q\in L^{2}([0,1]^q)$ given by
\begin{equation}\label{f-q}
g_q({\boldsymbol\lambda},K)=\frac{1}{\sqrt{K\pi}}\int_{[0,K\pi]_{-\alpha}}\sum^{\left\lfloor q/2\right\rfloor}_{\ell=0}
b_{q-2\ell}a_{2\ell}(h^{\otimes q-2\ell}_K(s,\cdot)\otimes h^{\prime\,\otimes 2\ell}(s,\cdot))({\boldsymbol\lambda})v_K(s)ds,
\end{equation}
with ${\boldsymbol\lambda}=(\lambda_1,\dots,\lambda_q)$. 
That is, the variable $I^{\overline{T}_K}_q([0,K\pi]_{-\alpha})$ belongs to the $q$-th Wiener Chaos ${\mathcal H}_q$. 
In particular, they are orthogonal for different values of $q$.
\end{remark}

\noindent{\bf Asymptotic variance of $I^{\overline{T}_K}_q\left([0,K\pi]_{-\alpha}\right)$:} 
Following the arguments in Aza\"{i}s \& Le\'{o}n \cite{cltazaisleon} 
we obtain the asymptotic variance of the number of zeros of $\overline{T}_K$ 
on $[0,K\pi]_{-\alpha}$ as $K\to\infty$. 
Furthermore, 
in Proposition \ref{conv:var} 
we show that the limit variance of $N_{\overline{T}_K}([0,K\pi]_{-\alpha})$ 
coincides with that of 
the number of zeros, $N_{X}([0,K\pi]_{-\alpha})$, of the stationary Gaussian process $X$. 

The following lemma gives a uniform upper bound (on $K$) for these variances. 

%
\begin{lemma}\label{var:unifacot}
There exists $t_0>0$ such that 
the variances of the normalized number of roots of $\overline{T}_K$ on the interval $[t_0,K\pi-t_0]$ 
\begin{equation*}
\var\left(\frac{N_{\overline{T}_K}[t_0,K\pi-t_0]-\Esp\left(N_{\overline{T}_K}[t_0,K\pi-t_0]\right)}{\sqrt{K\pi}}\right) 
\end{equation*}
are uniformly bounded on $K$.
\end{lemma}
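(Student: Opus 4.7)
The plan is to combine the chaos decomposition of Theorem \ref{theorem:expansion} with the off-diagonal covariance bounds \eqref{cota:supr}, in the spirit of Aza\"{\i}s \& Le\'{o}n \cite{cltazaisleon} but adapted to the non-stationary covariance $\overline{r}_K$.

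First, I would verify that Theorem \ref{theorem:expansion} applies verbatim with $[0,K\pi]_{-\alpha}$ replaced by $I_K:=[t_0,K\pi-t_0]$, since its proof only needs non-degeneracy of the pair $(\overline{T}_K,\overline{T}^{\prime}_K)$ (given by Lemma \ref{cotabaf}) and a finite second moment for $N_{\overline{T}_K}(I_K)$ (given by Lemma \ref{cota:vardiag}). Orthogonality of the chaoses then yields
\begin{equation*}
\var\left(\frac{N_{\overline{T}_K}(I_K)}{\sqrt{K\pi}}\right)=\sum_{q\geq 2}\Esp\bigl[I_q^{\overline{T}_K}(I_K)^{2}\bigr],
\end{equation*}
where only even $q$ contribute, since $b_k=0$ for odd $k$ forces $f_q\equiv 0$ whenever $q$ is odd. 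The task reduces to bounding this sum uniformly in $K$.

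Second, I would expand each summand using the definition \eqref{f:iqtk} and the bivariate diagram formula applied to the Gaussian pairs $(\overline{T}_K(s),\mathcal T^{\prime}_K(s))$ and $(\overline{T}_K(t),\mathcal T^{\prime}_K(t))$ (whose coordinates at a common point are orthogonal by construction):
\begin{equation*}
\Esp\bigl[I_q^{\overline{T}_K}(I_K)^{2}\bigr]=\frac{1}{K\pi}\iint_{I_K\times I_K}G_q(s,t)\,v_K(s)v_K(t)\,ds\,dt,
\end{equation*}
where $G_q$ is a polynomial of total degree $q$ in the four off-diagonal covariances $\overline{r}_K(s,t)$, $\overline{r}_K^{(1,0)}(s,t)/v_K(s)$, $\overline{r}_K^{(0,1)}(s,t)/v_K(t)$, $\overline{r}_K^{(1,1)}(s,t)/(v_K(s)v_K(t))$, with coefficients built from the products $a_{2\ell}b_{q-2\ell}$. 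Lemma \ref{cotabaf} ensures $v_K$ is bounded above and away from zero on $I_K$.

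Third, I would apply the bounds \eqref{cota:supr} to dominate each of the four renormalized covariances on $I_K\times I_K$ by $C(1/\tau+1/\sigma+\cdots+1/\tau^{3}+1/\sigma^{3})$, with $\tau=t-s$ and $\sigma=t+s$. Splitting the domain into $\{\tau\leq 1\}$ (where $|G_q|$ is trivially $O(1)$ and the strip has area $O(K\pi)$) and $\{\tau>1\}$ (where one exploits $\int_1^{\infty}\tau^{-q}d\tau<\infty$ and $\int_{2t_0}^{\infty}\sigma^{-q}d\sigma<\infty$ for $q\geq 2$), the change of variables $(s,t)\mapsto(\tau,\sigma)$ produces $\iint|G_q|\leq C_q\,K\pi$, so that $\Esp[I_q^{\overline{T}_K}(I_K)^{2}]\leq C_q$ with $C_q$ independent of $K$.

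The main obstacle is the summability of $\sum_q C_q$: the combinatorial factors $(q-2\ell)!(2\ell)!$ coming from the diagram formula, multiplied by the weights $a_{2\ell}b_{q-2\ell}$, must be controlled against the decay of the covariances. Following the device of \cite{cltazaisleon}, I would factor out one covariance from each $q$-fold product and use $\sup_{\tau\geq 1}|\overline{r}_K|<1$ for the remaining factors; the left-over combinatorial sum then equals $\|f_q\|_{L^{2}(\varphi\otimes\varphi)}^{2}$, which is summable in $q$ because $f_q$ is the $q$-th Hermite coefficient of an $L^{2}(\varphi\otimes\varphi)$ function (namely the limit as $\eta\to 0$ of the Gaussian-regularized Rice integrand in \eqref{Nsigma}, whose square integrability in $L^{2}(\Omega)$ is granted by Part 3 of Lemma \ref{lemma:chaos}). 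This yields a uniform-in-$K$ bound on the full series and concludes the proof.
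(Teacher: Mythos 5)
Your skeleton (split into a diagonal strip and an off-diagonal region, use the covariance decay \eqref{cota:supr} off the diagonal) is the same as the paper's, but the step that is supposed to make $\sum_q C_q$ converge is wrong. The $f_q$ are the Hermite components of the formal integrand $\delta_0(x)\,|y|$, which is \emph{not} in $L^2(\varphi\otimes\varphi)$: Part 3 of Lemma \ref{lemma:chaos} gives convergence of the integrated functional $N^{\eta}_{\overline{T}_K}$ in $L^2(\Omega)$, not convergence of the pointwise integrand $\varphi_\eta(x)|y|$ in $L^2(\varphi\otimes\varphi)$ (whose norm blows up as $\eta\to0$). Hence $\sum_q\|f_q\|^2_{L^2(\varphi\otimes\varphi)}=+\infty$; only the uniform bound $\sup_q\|f_q\|^2_2<\infty$ is available. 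This breaks your argument in both regions. Near the diagonal, ``$|G_q|=O(1)$'' is the Cauchy--Schwarz bound $|\Esp[f_qf_q]|\le\|f_q\|_2^2$, and these constants are not summable in $q$. Off the diagonal, factoring out only \emph{one} covariance leaves $\rho^{q-1}\|f_q\|^2_2$ (summable) times a single factor $\psi_K\lesssim 1/\tau+1/\sigma$, whose integral over $I_K\times I_K$ is of order $K\pi\log(K\pi)$, not $K\pi$; you must retain \emph{two} covariance factors (possible since $q\ge2$), i.e.\ use $\psi_K^q\le\rho^{q-2}\psi_K^2$ once $\tau,\sigma$ are large enough that $\psi_K\le\rho<1$. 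This is precisely Arcones's inequality \cite[Lemma 1]{arcones}, which is the tool the paper invokes for the off-diagonal part.

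For the diagonal strip the paper does something you omit: it makes no chaos-by-chaos estimate there at all, but covers the strip by $O(K)$ unit squares and bounds each $\cov\bigl(N_{\overline{T}_K}[j,j+1],\,N_{\overline{T}_K}[j',j'+1]\bigr)$ by Cauchy--Schwarz together with the uniform second-moment bound of Lemma \ref{cota:vardiag}. You invoke Lemma \ref{cota:vardiag} only to justify the $L^2$ expansion; you also need it (or an equivalent Rice-type second factorial moment estimate) to control the near-diagonal contribution, since no termwise-in-$q$ bound is summable there. With these two repairs --- two-factor extraction plus $\sup_q\|f_q\|_2^2<\infty$ (equivalently Arcones) off the diagonal, and Lemma \ref{cota:vardiag} plus Cauchy--Schwarz on the diagonal strip --- your outline becomes the paper's proof.
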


We are ready to compare the limit variances of $N_{\overline{T}_K}\left([0,K\pi]_{-\alpha}\right)/\sqrt{K\pi}$ 
and $N_{X}\left([0,K\pi]_{-\alpha}\right)/\sqrt{K\pi}$.
\begin{proposition}\label{conv:var}
For $q\geq 2$, the variances of $I_q^{\overline{T}_K}\left([0,K\pi]_{-\alpha}\right)$ 
and $I_q^{X}\left([0,K\pi]_{-\alpha}\right)$ 
have the same limit, denoted by $\sigma^2_q(0)$.
\end{proposition}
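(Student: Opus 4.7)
The strategy is to express $\var(I_q^{\overline{T}_K}([0,K\pi]_{-\alpha}))$ as a double integral over $[0,K\pi]^2_{-\alpha}$ whose integrand is a polynomial in the four cross-covariances among $\overline{T}_K(s),\mathcal{T}'_K(s),\overline{T}_K(t),\mathcal{T}'_K(t)$, and then compare this term by term with the analogous double integral for the stationary process $X$. By Remark \ref{remark:g-q}, $I_q^{\overline{T}_K}$ lies in the $q$-th Wiener chaos, so its variance equals $\Esp[I_q^{\overline{T}_K}([0,K\pi]_{-\alpha})^2]$, which, by Fubini and the diagram (Mehler) formula for products of Hermite polynomials applied to a Gaussian pair whose within-time coordinates are independent, reads
\[
\var\bigl(I_q^{\overline{T}_K}([0,K\pi]_{-\alpha})\bigr)=\frac{1}{K\pi}\iint_{[0,K\pi]^2_{-\alpha}}R_q^K(s,t)\,v_K(s)v_K(t)\,ds\,dt,
\]
where $R_q^K(s,t)$ is a universal polynomial of total degree $q$ in the covariances $\overline{r}_K(s,t)$, $\overline{r}_K^{(0,1)}(s,t)/v_K(t)$, $\overline{r}_K^{(1,0)}(s,t)/v_K(s)$, and $\overline{r}_K^{(1,1)}(s,t)/(v_K(s)v_K(t))$. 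The identical formula, with the stationary covariances of $X$ in place of those of $\overline{T}_K$, gives $\var(I_q^{X}([0,K\pi]_{-\alpha}))$.

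Next I would change variables to $(\tau,\sigma)=(t-s,t+s)$, whose Jacobian is $1/2$ and whose $\sigma$-sections have length $\sim 2K\pi$, and split the $\tau$-integral at $|\tau|=M$. On the tail $|\tau|>M$, the bounds \eqref{cota:supr}--\eqref{cota:supvar}, together with Lemma \ref{cotabaf} (which keeps $V_K$ and $v_K$ bounded away from zero on $[t_0,K\pi-t_0]$), give $|R_q^K(s,t)|\lesssim(1/|\tau|+1/\sigma)^q$ uniformly in $K$. Since $q\geq 2$, integrating in $\sigma$ and dividing by $K\pi$ bounds the tail contribution by $C/M^{q-1}$ plus an $O((\log K)/K)$ error coming from the mixed $1/\sigma$ terms; the same bound applies verbatim to $\var(I_q^{X})$. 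Hence both tails can be made uniformly small by taking $M$ large.

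On the strip $|\tau|\leq M$, Lemma \ref{lema:cK}(1) provides uniform convergence of $c_K(\tau)$ and its first two derivatives to $\sinc(\tau)$ and its derivatives, while the ``boundary'' terms $c_K(2s)$, $c_K(2t)$, $c_K(\sigma)$ that distinguish $\overline{r}_K(s,t)$ from the purely stationary $c_K(\tau)$ are all $O(1/\sigma)$ by \eqref{cota:supr1}, and similarly for the normalizers $V_K(\cdot),v_K(\cdot)$. Expanding $R_q^K$ monomial by monomial, one checks that the difference between $R_q^K(s,t)$ and the corresponding stationary polynomial of $\tau$ alone is $O(1/\sigma)$ on the strip, so its contribution to the variance is $O((M\log K)/K)\to 0$, while the stationary part is $\sigma$-independent and its $\sigma$-integration contributes a factor $\sim 2K\pi$ that exactly cancels the prefactor $1/(K\pi)$. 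Letting first $K\to\infty$ and then $M\to\infty$, both variances converge to the common limit $\sigma_q^2(0)=\int_{-\infty}^{\infty}R_q^{\infty}(\tau)\,d\tau$, where $R_q^{\infty}$ is the polynomial built from the covariances of the stationary $\sinc$-process.

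\textbf{Main obstacle.} The delicate technical point is the monomial-by-monomial verification that the non-stationary corrections to $R_q^K$ really are of order $1/\sigma$: these corrections enter not only through the numerator $c_K(\tau)+c_K(\sigma)$ of $\overline{r}_K$ but, more subtly, through the denominators $V_K(s)V_K(t)$ and $v_K(s)v_K(t)$ that normalize each of its four arguments, and these denominators themselves depend on $c_K(2s),c_K(2t)$. Lemma \ref{cotabaf} is indispensable to keep them bounded away from zero on $[t_0,K\pi-t_0]$, and Lemma \ref{cota:vardiag} absorbs the contribution of a thin strip around the diagonal $\tau=0$, where the $1/\tau$ singularity in the pointwise bound $R_q^K\lesssim(1/|\tau|+1/\sigma)^q$ renders it useless. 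Once this careful expansion is in place, the coincidence of the two limits follows automatically, since on $|\tau|\leq M$ both problems reduce, after $K\to\infty$, to exactly the same integrand built from $\sinc(\tau)$ and its derivatives.
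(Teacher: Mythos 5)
Your proposal is correct and follows essentially the same route as the paper: both express the variance via the Mehler/diagram formula as a double integral in the coordinates $(\tau,\sigma)=(t-s,t+s)$, exploit that $\sigma\geq 2(K\pi)^{\alpha}\to\infty$ on the chopped interval so that every non-stationary ingredient ($c_K(\sigma)$, $c_K(2s)$, $c_K(2t)$ and the normalizers) disappears in the limit, and control the remainder with the decay bounds \eqref{cota:supr}--\eqref{cota:supvar} together with Lemmas \ref{cotabaf} and \ref{cota:vardiag}. The only cosmetic difference is bookkeeping: the paper collapses the $\sigma$-integral by the mean value theorem and invokes Lemma \ref{var:unifacot} for domination, whereas you integrate the $O(1/\sigma)$ corrections directly and split the $\tau$-range at $|\tau|=M$; both yield the same limit $\sigma_q^2(0)$.
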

\begin{remark}
The value of $V^2=\sum^{\infty}_{q=2}\sigma_q^2(0)$ is shown to be finite in Aza\"{i}s \& Le\'{o}n \cite{cltazaisleon}. 
Furthermore, in Granville \& Wigman it is shown to be approximately $0.089$. 
Furthermore, by Aza\"{\i}s \& Le\'{o}n \cite{cltazaisleon} this also coincides with the asymptotic variance 
(after the same standardization) of $N_{X_K}$.
\end{remark}
\begin{proof}
By Remark \ref{remark:g-q} $I_q^{\overline{T}_K}\left([0,K\pi]_{-\alpha}\right)$ is a 
Wiener-It\^{o} integral, then it is a centered random variable, moreover recalling that $\tau=t-s$ and $\sigma=t+s$, we have
\begin{multline*}
\var\left(I^{\overline{T}_K}_{q}\left([0,K\pi]_{-\alpha}\right)\right)=\Esp\left(\left[I^{\overline{T}_K}_{q}\left([0,K\pi]_{-\alpha}\right)\right]^2\right)\\
=\frac{1}{K\pi}\int^{K\pi-(K\pi)^{\alpha}}_{(K\pi)^{\alpha}}\int^{K\pi-(K\pi)^{\alpha}}_{(K\pi)^{\alpha}}\Esp\big[
f_{q}(\overline{T}_K(s),\overline{T}^{\prime}_K(s))f_{q}(\overline{T}_K(t),\overline{T}^{\prime}_K(t))\big]\\
\cdot v_K(s)v_K(t)\;ds\;dt\\
=\frac{1}{2}\int^{K\pi-2(K\pi)^{\alpha}}_{-K\pi+2(K\pi)^{\alpha}}g(\tau) d\tau 
\end{multline*}
where
\begin{equation*}
g(\tau)=\frac{1}{K\pi}
\int^{2K\pi-2(K\pi)^{\alpha}-|\tau|}_{2(K\pi)^{\alpha}+|\tau|}E_K(\tau,\sigma)
v_K((\sigma-\tau)/2)v_K((\sigma+\tau)/2)d\sigma
\end{equation*}
and $E_K(\tau,\sigma)$ is the expectation written in terms of $\tau$ and $\sigma$, 
namely 
$E_K(\tau,\sigma)=\Esp\big[
f_{q}(\overline{T}_K((\sigma-\tau)/2),\overline{T}^{\prime}_K((\sigma-\tau)/2))
f_{q}(\overline{T}_K((\sigma+\tau)/2),\overline{T}^{\prime}_K((\sigma-\tau)/2))\big]$. 

Applying the mean value theorem, there exists 
$ \widetilde{\sigma}_K\in[2(K\pi)^{\alpha}-|\tau|,2K\pi-2(K\pi)^{\alpha}+|\tau|]$, 
such that
\begin{equation*}
g(\tau)=\frac{2K\pi-4(K\pi)^\alpha-4|\tau|}{K\pi}E_K(\tau, \widetilde{\sigma}_K)
v_K(( \widetilde{\sigma}_K-\tau)/2)v_K(( \widetilde{\sigma}_K+\tau)/2).
\end{equation*}
A direct computation shows that
\begin{equation}\label{eq:vK2}
v^2_K(u)=\frac{1}{1+c_K(2u)}\left[c^{\prime\prime}_K(2u)-c^{\prime\prime}_K(0)
-\frac{(c^{\prime}_K(2u))^2}{1+c_K(2u)}\right]. 
\end{equation}
Note that by the Inequalities in \eqref{cota:supr} $c_K(2u)$ and its derivatives tend to $0$ 
when $u\to\infty$. 
Besides, $c^{\prime\prime}_K(0)=-\frac{(K+1)(2K+1)}{6K^2}$. 
Therefore, since $\tilde{\sigma}_K\to_K\infty$ we deduce that 
$v_K(\widetilde{\sigma}_K)\to\sqrt{-\sinc^{\prime\prime}(0)}=1/\sqrt{3}$ as $K\to\infty$. 

By Mehler formula, see Aza\"{i}s \& Wschebor \cite[Lemma 10.7]{cltazaisleon}, 
$E_K(\tau,\sigma)$ is a polynomial on 
the covariances of 
$(\overline{T}_K(s),\overline{T}^{\prime}_K(s))$ and $(\overline{T}_K(t),\overline{T}^{\prime}_K(t))$; 
thus, $E_K(\tau, \widetilde{\sigma}_K)$ is a polynomial on 
\begin{equation*}
(c_K(\tau)\pm c_K( \widetilde{\sigma}))/\sqrt{(1+c_K( \widetilde{\sigma}-\tau))(1+c_K( \widetilde{\sigma}+\tau))},
\end{equation*}
and its derivatives. 
Let $K\to\infty$, then, $ \widetilde{\sigma}_K\to\infty$, $c_K( \widetilde{\sigma}_K),c_K( \widetilde{\sigma}_K\pm\tau)\to 0$, 
$c_K(\tau)\to\sinc(\tau)$ 
and $E_K(\tau, \widetilde{\sigma})$ tends to the same polynomial but with $c_K(\tau)$ replaced by $\sinc(\tau)$ and 
$c_K( \widetilde{\sigma}_K)$ replaced by $0$. 

Consider now the centered stationary Gaussian process $X$ 
with covariance function at $(s,t)$ given by $\sinc(\tau)$. 
For $\var(I^X_q)$, 
analogous computations to the preceeding ones 
show 
that 
\begin{equation*}
\lim_{K\to\infty}\var(I^X_q([0,K\pi]_{-\alpha}))
=\lim_{K\to\infty}\var(I^{\overline{T}_K}_q([0,K\pi]_{-\alpha})),
\end{equation*}
 provided the necessary domination given by 
Lemma \ref{var:unifacot}. The result follows.
\end{proof}

\subsection{\ CLT for the number of roots of Classical Trigonometric Polynomials:} 
This section is devoted to prove our main result: Theorem \ref{main}. 

Recall that for $q=1$, the random variables $I^{\overline{T}_K}_1\left([0,K\pi]_{-\alpha}\right)=0$ for all $K$ because $b_1=0$ .

\noindent{\bf Asymptotic Gaussianity of $I^{\overline{T}_K}_q$ for $q>1$:}  
We use the Fourth Moment Theorem by Peccati \& Tudor.
\begin{theorem}[Theorem 1 of \cite{peccati-tudor}]\label{PeTu}
Assume that for $q_1<q_2<\dots<q_m$ it holds that $\Esp\left[I_{q_i}(f^{(k)}_i)\right]^2\to_k\sigma^2_{ii}$ 
then the following conditions are equivalent:
\begin{enumerate}
 \item the vector  
\begin{equation*}
\left(I_{q_1}(f^{(k)}_1),\dots,I_{q_1}(f^{(k)}_m)\right)\,\mathop{\Rightarrow}\limits_{k\to\infty}\,N(0,D_m)
\end{equation*}
where $D_m$ is a diagonal matrix with entries $\sigma^2_{ii}$.
 \item for $i=1,\dots,m$ and $p=0,1,\dots,q_{i}-1$
\begin{equation*}
\|f^{(k)}_{i}\otimes_p f^{(k)}_{i}\|^2_{L^2([0,1]^{2q_i-2p})}\,\mathop{\to}\limits_{k\to\infty}\,0, 
\end{equation*}
where $\otimes_p$ denotes the $p$-order contraction, that is
\begin{multline*}
f^{(k)}_{q_i}\otimes_p f^{(k)}_{q_i}(x_1,\dots,x_{q_i-p},y_1,\dots,y_{q_i-p})\\=
\int_{[0,1]^p}f^{(k)}_{q_i}(x_1,\dots,x_{q_i-p},z_1,\dots,z_p)\\
\cdot f^{(k)}_{q_i}(y_1,\dots,y_{q_i-p},z_1,\dots,z_p)\;dz_1\;\dots\;dz_p .
\end{multline*}
\end{enumerate}
\end{theorem}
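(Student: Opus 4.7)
My plan is to prove the two implications separately, with the product formula for multiple Wiener--It\^o integrals and the hypercontractivity of a fixed Wiener chaos as the common engine. Throughout, set $F_i^{(k)} := I_{q_i}(f_i^{(k)})$, and let $Z = (Z_1, \ldots, Z_m) \sim N(0, D_m)$.

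For $(1) \Rightarrow (2)$, I would first observe that joint weak convergence forces the marginal convergences $F_i^{(k)} \Rightarrow N(0, \sigma_{ii}^2)$. Since each $F_i^{(k)}$ sits in the fixed chaos $\mathcal{H}_{q_i}$, hypercontractivity makes all $L^p$ norms on $\mathcal{H}_{q_i}$ equivalent; combined with the second-moment assumption this yields uniform integrability of $(F_i^{(k)})^4$ and hence $\Esp[(F_i^{(k)})^4] \to 3\sigma_{ii}^4$. I would then invoke the classical identity, provable by induction using the product formula,
\begin{equation*}
\Esp\!\left[(F_i^{(k)})^4\right] - 3\,\Esp\!\left[(F_i^{(k)})^2\right]^2 = \sum_{p=1}^{q_i - 1} c_{q_i, p}\,\bigl\|f_i^{(k)} \tilde{\otimes}_p f_i^{(k)}\bigr\|^2_{L^2([0,1]^{2q_i-2p})},
\end{equation*}
where the $c_{q_i, p}$ are strictly positive and $\tilde{\otimes}_p$ denotes the symmetrized contraction. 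The left side tends to $0$ and every summand is non-negative, so each symmetrized contraction vanishes, and a standard majoration gives $\|f_i^{(k)} \otimes_p f_i^{(k)}\| \to 0$ as well, which is (2).

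For $(2) \Rightarrow (1)$, applying the univariate Nualart--Peccati fourth-moment theorem chaos-by-chaos produces marginal convergence $F_i^{(k)} \Rightarrow N(0, \sigma_{ii}^2)$ for each $i$. The delicate step is to upgrade these marginals to joint convergence with the \emph{diagonal} covariance $D_m$, i.e., to establish asymptotic independence across different Wiener chaoses. My plan here is the method of moments based on the product formula
\begin{equation*}
I_p(g)\,I_r(h) = \sum_{\ell=0}^{p \wedge r} \ell!\binom{p}{\ell}\binom{r}{\ell}\,I_{p+r-2\ell}\bigl(g\,\tilde{\otimes}_\ell\, h\bigr).
\end{equation*}
Iterating this formula expands any mixed moment $\Esp\bigl[\prod_i (F_i^{(k)})^{n_i}\bigr]$ as a finite linear combination of $L^2$-inner products of iterated contractions of the $f_i^{(k)}$'s. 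Under hypothesis (2), the only surviving contributions in the limit come from pairings \emph{within} each single index $i$ (the ``diagonal'' Feynman-like pairings), which reproduce the Wick/Isserlis formula for independent Gaussians and give $\prod_i \Esp[Z_i^{n_i}]$. By hypercontractivity these moments are uniformly bounded, and by moment determinacy of the multivariate Gaussian the convergence of all joint moments upgrades to weak convergence to $N(0, D_m)$.

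The main obstacle will be the combinatorial control of the \emph{cross} contractions $f_i^{(k)} \otimes_\ell f_j^{(k)}$ for $i \neq j$, since hypothesis (2) only furnishes the vanishing of the \emph{diagonal} contractions $f_i^{(k)} \otimes_p f_i^{(k)}$. The decisive ingredient is a Cauchy--Schwarz-type inequality of the shape
\begin{equation*}
\bigl\|f_i^{(k)} \otimes_\ell f_j^{(k)}\bigr\| \leq \bigl\|f_i^{(k)} \otimes_{\ell'} f_i^{(k)}\bigr\|^{1/2}\,\bigl\|f_j^{(k)} \otimes_{\ell''} f_j^{(k)}\bigr\|^{1/2},
\end{equation*}
valid for suitably chosen indices $\ell', \ell''$, which transfers control from diagonal to cross terms. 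Once this estimate is in hand, the mixed-moment computation reduces to bookkeeping of non-vanishing diagrams and the equivalence follows.
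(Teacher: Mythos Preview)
The paper does not prove this statement: Theorem~\ref{PeTu} is quoted from Peccati--Tudor \cite{peccati-tudor} and used purely as a black box in the CLT argument, so there is no in-paper proof for you to match. Your sketch is therefore not a reconstruction of anything in the present article but an outline of the external result itself.

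On its own merits, your outline follows the now-standard route (closer to the Nourdin--Peccati exposition than to the original stable-convergence argument in \cite{peccati-tudor}). The $(1)\Rightarrow(2)$ direction via hypercontractivity and the fourth-moment identity is correct; note in passing that the range $p=0,\dots,q_i-1$ in the stated condition (2) is a slip in the paper, since the $p=0$ contraction has norm $\|f_i^{(k)}\|^2\to\sigma_{ii}^2/q_i!$ and cannot vanish---your proof rightly treats only $p=1,\dots,q_i-1$. For $(2)\Rightarrow(1)$, your decisive Cauchy--Schwarz step is valid: for symmetric $f_i\in\Hb^{\odot q_i}$, $f_j\in\Hb^{\odot q_j}$ and $1\le\ell\le q_i\wedge q_j$ one has
\begin{equation*}
\bigl\|f_i\otimes_\ell f_j\bigr\|^2
=\bigl\langle f_i\otimes_{q_i-\ell}f_i,\ f_j\otimes_{q_j-\ell}f_j\bigr\rangle
\le\bigl\|f_i\otimes_{q_i-\ell}f_i\bigr\|\,\bigl\|f_j\otimes_{q_j-\ell}f_j\bigr\|,
\end{equation*}
and since $q_i\neq q_j$ at least one of $q_i-\ell,\,q_j-\ell$ falls in the admissible range, so the right-hand side tends to zero under (2). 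The one place your sketch is genuinely thin is the leap from ``all cross contractions vanish'' to ``every mixed moment factorizes as for independent Gaussians'': the diagram bookkeeping in the iterated product formula is not automatic, and you should either carry it out in full or replace the moment method by the Malliavin-calculus route (show $\langle DF_i^{(k)},DF_j^{(k)}\rangle_{\Hb}\to q_i\sigma_{ii}^2\indicator_{\{i=j\}}$ in $L^2$ and apply a multivariate Stein bound), which sidesteps the combinatorics entirely.
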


By Equation \eqref{f-q} we know that $I^{\overline{T}_K}_q([0,K\pi]_{-\alpha})=I^{B}_{q}(g_q)$ with 
\begin{equation*}
g_q({\boldsymbol\lambda},K)=\frac{1}{\sqrt{K\pi}}\int_{[0,K\pi]_{-\alpha}}\sum^{\left\lfloor q/2\right\rfloor}_{\ell=0}
b_{q-2\ell}a_{2\ell}(h(s,\cdot)^{\otimes q-2\ell}\otimes h^{\prime}(s,\cdot)^{\otimes 2\ell})({\boldsymbol\lambda})v_K(s)ds,
\end{equation*}
and ${\boldsymbol\lambda}=(\lambda_1,\dots,\lambda_q)$. 

Using the isometric property of the stochastic integral, 
we have
\begin{equation*}
h(s,\cdot)^{\otimes p}\otimes_n h(t,\cdot)^{\otimes p}
=\overline{r}_K(s,t)^{n}\cdot h(s,\cdot)^{\otimes p-n}\otimes h(t,\cdot)^{\otimes p-n}.
\end{equation*} 
Similar formulas hold for the terms which 
include $h^{\prime}$. 
More precisely, by the isometric property, they give factors that are powers of 
$\cov(\overline{T}_K(s),\mathcal {T}^{\prime}_K(t))=:\widetilde{r}^{\,\prime}_K(s,t)$ 
or $\cov(\overline{T}^\prime_K(s),\mathcal {T}^{\prime}_K(t))=:\widetilde{r}^{\,\prime\prime}_K(s,t)$. 
Therefore
\begin{multline*}
\|g_q(K)\|^2_2= 
\frac{1}{K\pi}\int_{[0,K\pi]_{-\alpha}}\int_{[0,K\pi]_{-\alpha}}v_K(s)v_K(t)\sum^{q/2}_{\ell=0}\sum^{q/2}_{\ell^\prime=0}
b_{q-2\ell}b_{q-2\ell^\prime}a_{2\ell}a_{2\ell^\prime}\\
\cdot\overline{r}_K(s,t)^{q-2(\ell\vee\ell^\prime)}{\widetilde{r}^{\,\prime\prime}_K(s,t)}^{2(\ell\wedge\ell^\prime)}
{\widetilde{r}^{\,\prime}_K(s,t)}^{2(\ell\vee\ell^\prime)-2(\ell\wedge\ell^\prime)}ds\; dt,
\end{multline*}
Where $x\vee y = \sup(x,y)$ and  $x\wedge y = \inf (x,y)$ .
It is quite tedious to write down the contractions and their norms, 
but the resulting integrals are quite similar. 
Let us do it for the case $n=1$ and $q$ odd (so that $q-2\ell>0$), we have
\begin{multline*}
\|g_q\otimes_1 g_q(K)\|^2_2= 
\frac{1}{(K\pi)^2}\int_{[0,K\pi]_{-\alpha}}\int_{[0,K\pi]_{-\alpha}}\int_{[0,K\pi]_{-\alpha}}\int_{[0,K\pi]_{-\alpha}}
\;ds\;dt\;ds^\prime\;dt^\prime\\
v_K(s)v_K(t)v_K(s^\prime)v_K(t^\prime)\sum^{q/2}_{\ell,\ell^\prime=0}\sum^{q/2}_{k,k^\prime=0}
b_{q-2\ell}b_{q-2\ell^\prime}a_{2\ell}a_{2\ell^\prime}b_{q-2k}b_{q-2k^\prime}a_{2k}a_{2k^\prime}\\
\cdot\overline{r}_K(s,t)\overline{r}_K(s^\prime,t^\prime)
\overline{r}_K(s,s^\prime)^{q-2(\ell\vee\ell^\prime)-1}{\widetilde{r}^{\,\prime\prime}_K(s,s^\prime)}^{2(\ell\wedge\ell^\prime)}
{\widetilde{r}^{\,\prime}_K(s,s^\prime)}^{2(\ell\vee\ell^\prime)-2(\ell\wedge\ell^\prime)}\\
\cdot\overline{r}_K(t,t^\prime)^{q-2(k\vee k^\prime)-1}{\widetilde{r}^{\,\prime\prime}_K(t,t^\prime)}^{2(k\wedge k^\prime)}
{\widetilde{r}^{\,\prime}_K(t,t^\prime)}^{2(k\vee k^\prime)-2(k\wedge k^\prime)}.
\end{multline*}
Observe that in the general case, 
the exponent of $\overline{r}_K(s,t)$ and $\overline{r}_K(s^{\prime},t^{\prime})$ is $n$, 
but the sum of the exponents in the factors involving $(s,t),(s,s^\prime)$, and 
in those involving $(s^\prime,t^\prime),(t,t^\prime)$, is $q$ 
(the total sum of the exponents is $2q$).

Now, we have to take the limit as $K\to\infty$. 

In the case of $\|g_q\|^2_2$, since $I^B_q$ is an isometry, 
it follows that $$\|g_q\|^2_2=\var\left(I^{\overline{T}_K}_{q}\left([0,K\pi]_{-\alpha}\right)\right)\to_{K\to\infty}\sigma^2_q(0)>0.$$

Now, we consider the limit of $\|g_q\otimes_ng_q\|^2_2$ as $K\to\infty$. 
Since, the $v_K$ are bounded by Equation \eqref{eq:vK2} and Inequalities \eqref{cota:supr}, 
the sums have a finite fixed number of terms and the $a$, $b$ coefficients are constant, 
the important ingredients are the covariances. 
We split the domain of integration into two parts: 
a tubular neighborhood of radio $\eta$ 
around the diagonal $s=t=s^\prime=t^\prime$ and its complement. 
Therefore, we have:

\noindent{\bf Close to the diagonal:} we assume that 
$|t-s|<\eta,|t^\prime-s^\prime|<\eta,|t^\prime-t|<\eta$ and $|s^\prime-s|<\eta$.

The covariances are bounded, in absolute value, from above by constants, 
for instance
\begin{equation*}
\left|\cov\left(\overline{T}_K(s),\mathcal{T}_{K}'(t)\right)\right|=\frac{1}{V_K(s)V_K(t)v_K(t)}
\left|\partial_t r_K(s,t)-\frac{c_K^\prime(2t)}{2V^2_K(t)}r_K(s,t)\right|,
\end{equation*}
with $r_K(s,t)=\textonehalf(c_K(\tau)+c_K(\sigma))$. 
At $\tau=0$ we have $c_K(0)=1$, $c^\prime_K(0)=0$ and $c^{\prime\prime}_K(0)=-\frac{(K+1)(2K+1)}{6K^2}$, 
thus, 
by continuity the terms involving $\tau$ are bounded. 
The remaining quantities are easily seen to be bounded by Lemma \ref{cotabaf} 
and Inequalities \eqref{cota:supr} and \eqref{cota:supvar}. 

Therefore, the integral is bounded by a constant times the volume of the tubular neighborhood 
of the diagonal. 
Such volume is proportional to $K\pi$. 

In conclusion, the $1/(K\pi)^2$ is not compensated by the integral, 
so the upper bound for the integral in the formula for the norm of the contraction 
(restricted to the neighborhood of the diagonal) tends to zero.

\noindent{\bf Far from the diagonal:} In the rest of the proof of the CLT, $C$ stands for some constant 
whose actual value is meaningless, but not depending on $K$.

We may assume that at least one of the following $|t-s|>\eta$, 
$|t^{\prime}-s^{\prime}|>\eta$, $|t^{\prime}-t|>\eta$, $|s^{\prime}-s|>\eta$ holds true. 

By Inequalities \eqref{cota:supr} and \eqref{cota:supvar}, the (absolute value of the) covariances 
$\overline{r}$, $\widetilde{r}^{\,\prime}$ and $\widetilde{r}^{\,\prime\prime}$ at $x,y$ are bounded from above 
by $C(1/|x-y|+1/|x+y|)$ when $|x-y|>\eta$. 
Therefore, the product in the integrand of $\|g_q\otimes_ng_q\|^2_2$ 
is bounded by the product of $1/|t-s|+1/|t+s|$, $1/|t^\prime-s^\prime|+1/|t^\prime+s^\prime|$, 
$1/|s-s^\prime|+1/|s+s^\prime|$ and $1/|t-t^\prime|+1/|t-t^\prime|$. 
Each one of these factors appears in the bound if the distance between 
the corresponding variables is larger than $\eta$. 
Otherwise, we bound them by constant as in the previous case.

Furthermore, the exponents of the covariances involving $s,t$ and $s^\prime,t^\prime$ 
is $n=1,\dots,q-1$, 
the sum of the exponents in the factors  involving $s,s^\prime$ is $q-n=1,\dots q-1\geq 1$, 
and the sum of the exponents in the factors  involving $t,t^\prime$ also is $q-n\geq 1$. 

Let us consider one of the possible cases, the others are similar. 
Say that $n=1$, $|t-s|>\eta$, $|t^{\prime}-s^{\prime}|<\eta$, $|t^{\prime}-t|<\eta$ and $|s^{\prime}-s|<\eta$, 
call $A$ the set of points verifying these inequalities. 
We bound the covariances involving the variables $t^\prime,s^\prime$; $t^\prime,t$; and $s^\prime,s$ 
in the integrand by constants. 
Then, the integrand is bounded by 
\begin{multline*}
C\int_A\left[\frac{1}{\tau}+\frac{1}{\sigma}\right]\;ds\;dt\;ds^\prime\;dt^\prime\\
=C\int_A\frac{1}{\tau}\;d\tau\;dt\;ds^\prime\;dt^\prime
+C\int_A\frac{1}{\sigma}\;d\sigma\;dt\;ds^\prime\;dt^\prime\\
\leq C\log(K\pi)\int_{A_{-1}}\;dt\;ds^\prime\;dt^\prime
\end{multline*}
where $\tau=t-s$, $\sigma=t+s$ and $A_{-1}=\{(t,s^\prime,t^\prime):|t^\prime-s^\prime|<\eta,|t^\prime-t|<\eta,|s^\prime-s|<\eta\}$.
Now, the volume of $A_{-1}$ is bounded by the volume of 
the tubular neighborhood of the diagonal of radius $\sqrt{3}\eta$, 
its volume is bounded by constant times $K\pi$, the result follows. 
Thus, the integral is bounded by a constant times $\log(K\pi)K\pi$, divided by $(K\pi)^2$, 
it tends to zero.

Putting together both parts we conclude that $\|g_q\otimes_ng_q\|^2_2\to 0$ as $K\to\infty$ 
for $n=1,\dots,q-1$. 
Therefore, $I^{\overline{T}_K}_q$ converges in distribution to a Gaussian random variable as $K\to\infty$
for all $q$.

\noindent{\bf Asymptotic Gaussianity of the sum:} 
Since, the $I^{\overline{T}_K}_{q}([0,K\pi]_{-\alpha})$ are orthogonal, by Theorem \ref{PeTu}
and Lemma \ref{conv:var}, 
their $Q$-th partial sum converges in distribution, as $K$ grows to infinity, to a Gaussian random variable 
with variance $\sum^{Q}_{1}\var\left(I^{X}_q\left([0,K\pi]_{-\alpha}\right)\right)$, 
therefore
\begin{equation*}
\frac{N_{\overline{T}_K}([0,K\pi]_{-\alpha})-\Esp\left(N_{\overline{T}_K}([0,K\pi]_{-\alpha})\right)}{\sqrt{K\pi}}=\sum^{\infty}_{q=1}I^{\overline{T}_K}_{q}([0,K\pi]_{-\alpha})
\end{equation*}
converges in distribution to a Gaussian random variable with variance 
$V^2=\sum^{\infty}_{q=1}\var\left(I^{X}_q\left([0,K\pi]_{-\alpha}\right)\right)$.
(It is well known that if $\mu_n\to\mu_\infty$, $\sigma_n\to\sigma_\infty$, $X_n\sim N(\mu_n,\sigma^2_n)$ 
then $X_n\Rightarrow X_\infty$.)
Finally, reasoning as in Lemma \ref{lemma:truncar}, note that 
$$\lim_{K\to\infty}\var\left(I^{X}_q\left([0,K\pi]_{-\alpha}\right)\right)=\lim_{K\to\infty}\var\left(I^{X}_q\left([0,K\pi]\right)\right)=\sigma^{2}_{q}(0).$$
This proves the Theorem.

\begin{proof}[Proof of Corollary \ref{cor}]
We do not fill in the details. 
The scheme of the proof is the following. 

First compute the cross correlation $\rho_K(s,t)=\Esp\left(\overline{T}_K(s)\overline{T}(t)\right)$
between $\overline{T}_K$ and $\overline{T}$:
\begin{multline*}
\rho_K(s,t):=\Esp{\overline{T}_K(s)\overline{T}(t)}\\
=\frac{1}{V_K(s)V(t)}
\Esp\int^1_0\sum^{K}_{n=1}\cos\left(\frac{n}{K}s\right)\indicator\left\{\left[\frac{n-1}{K},\frac{n}{K}\right)\right\}(\lambda)dB_{\lambda}
\cdot\int^1_0\cos\left(\lambda^{\prime} t\right)dB_{\lambda^{\prime}}\\
=\frac{1}{V_K(s)V(t)}\sum^{K}_{n=1}\int^{\frac{n}{K}}_{\frac{n-1}{K}}\cos\left(\frac{n}{K}s\right)\cos(\lambda t)d\lambda\\
=\frac{1}{V_K(s)V(t)}
\frac{1}{2}\sum^{K}_{n=1}\int^{\frac{n}{K}}_{\frac{n-1}{K}}\left[\cos\left(\frac{n}{K}s-\lambda t\right)+
\cos\left(\frac{n}{K}s+\lambda t\right)\right]d\lambda\\
=\frac{1}{\sqrt{1+c_K(2s)}\sqrt{1+\sinc(2t)}}\sum^{K}_{n=1}\int^{1/K}_{0}
\left[\cos\left(\frac{n}{K}\tau-vt\right)+\cos\left(\frac{n}{K}\sigma-vt\right)\right]dv.
\end{multline*}
where we made 
the change of variable $\lambda\mapsto\frac{n}{K}-\lambda$ in the last equality. 
It follows that, see Aza\"{i}s \& Le\'{o}n \cite{cltazaisleon}, $\rho_K(s,t)\longrightarrow \overline{r}(s,t)$ uniformly on 
off-diagonal compacts and so do the derivatives of $\rho_K(s,t)$ to the respective derivatives 
of $\overline{r}(s,t)$. 
Furthermore, these functions are bounded by $c(1/\tau+1/\sigma)$.

Now
\begin{equation*}
\Esp\left((I^{\overline{T}_K}_q-I^{\overline{T}}_q)^2\right)=
\Esp\left((I^{\overline{T}_K}_q)^2\right)-2\Esp\left(I^{\overline{T}_K}_q\cdot I^{\overline{T}}_q\right)
+\Esp\left((I^{\overline{T}}_q)^2\right).
\end{equation*}
Repeating the arguments in Proposition \ref{conv:var} and using these convergences we deduce 
that each expectation converges to $\sigma^2_q(0)$. 
Hence, $\|I^{\overline{T}_K}_q-I^{\overline{T}}_q\|_{L^2}\to_{K}0$.

Finally, repeating the arguments in the proof of Theorem \ref{main} for the Gaussianity of the sum 
we deduce that 
the asymptotic distributions of $N_{\overline{T}}([0,K\pi]_{-\alpha})$ 
and $N_{\overline{T}_K}([0,K\pi]_{-\alpha})$ (after standardization) coincide.
\end{proof}

\section{Proofs of the Lemmas}\label{proofs}

\begin{proof}[Proof of Lemma \ref{cotabaf}]
Let us start with the proof for $T$ and $\overline{T}^{\prime}$. 

The variance of $T$ is equal to $V(t):=(1+\sinc(2t))/2$, hence 
it suffices to prove that $\sinc(2t)>-2/\pi$ for $t\in[0,K\pi]$. 
Observe that the critical points of $\sinc$ are the roots of the equation $\sinc(x)=\cos(x)$, 
then, it is easy to see that in the interval $[-\pi/2,\pi,2]$ the only root of this equation is $x=0$, which 
is a maximum since $\sinc(0)=1$. 
Thus, the minimum of $\sinc$ lies outside this interval, 
so, as $|\sinc(x)|\leq 1/|x|$ the minimum is greater than $-2/\pi$, thus the result follows. 

Let us look now at the variance of $\overline{T}^{\prime}(t)$
\begin{equation}\label{var:Tprime}
v^2(t)=\frac{1}{1+\sinc(2t)}\left[\sinc^{\prime\prime}(2t)-\sinc^{\prime\prime}(0)-\frac{\sinc^{\prime}(2t)^2}{1+\sinc(2t)}\right].
\end{equation} 
Then
\begin{equation*}
v^2(t)\geq\frac{1}{2}\left[\frac{1}{3}-\sinc^{\prime\prime}(2t)-\frac{\sinc^{\prime}(2t)^2}{1+\sinc(2t)}\right]
\end{equation*}
and from the Inequalities in \eqref{cota:supr}, which hold since we are looking at compact intervals not containing $0$, 
it is easy to see that the latter expression is bounded away from zero for large $t$.

Now, let us consider the variances of $T_K$ and $\overline{T}^{\prime}_K$. 
By Lemma \ref{lema:cK} (see also Equation (37) in \cite{granville}) 
we know that as $K\to\infty$
\begin{equation*}
c^{(j)}_K(2s)=\sinc^{(j)}(2s)+O(1/K^{j}), 
\end{equation*}
where the constants involved in the $O$ notation do not depend on $K$ 
and $j=0,1,2$ stands for functional value, first and second derivative respectively. 
From this asymptotic and the result for $T$ and $\overline{T}^\prime$, the result 
follows also for $T_K$ and $\overline{T}^{\prime}_K$.
\end{proof}

\begin{proof}[Proof of Lemma \ref{lemma:truncar}]
Let us look at the interval $[0,(K\pi)^{\alpha}]$, the other one is analogous. 
First, we use Markov inequality to bound the probability by an expression involving the expectation of the number of roots, 
that is, for given $\varepsilon>0$ we have
\begin{equation*}
\Prob\left(\left|\frac{N_{\overline{T}_K}-\Esp\left(N_{\overline{T}_K}\right)}{\sqrt{K\pi}}\right|>\eps\right)
\leq\frac{\Esp\left|N_{\overline{T}_K}-\Esp\left(N_{\overline{T}_K}\right)\right|}{\eps\sqrt{K\pi}}
\leq\frac{2\Esp\left(N_{\overline{T}_{K}}\right)}{\eps\sqrt{K\pi}}
\end{equation*}
(note that above we have eliminated the dependence of the interval in the notation of crossings for ease of notation) thus, it is enough to show that $\Esp\left(N_{\overline{T}_K}\left([0,(K\pi)^{\alpha}]\right)\right)/\sqrt{K\pi}\to 0$.

With this aim, we use the first order Rice formula.
\begin{equation*}
\Esp\left(N_{\overline{T}_K}([0,(K\pi)^\alpha])\right)=
\int^{(K\pi)^{\alpha}}_0\Esp\left[|\overline{T}^{\prime}_K(t)|\mid\overline{T}_K(t)=0\right]p_{\overline{T}_K(t)}(0)dt. 
\end{equation*}
By a standard Gaussian regression 
\begin{equation*}
\Esp\left[|\overline{T}^{\prime}_K(t)|\mid\overline{T}_K(t)=0\right]
=\Esp\left[\left|\overline{T}^{\prime}_K(t)-\frac{\cov(\overline{T}^{\prime}_K(t),\overline{T}_K(t))}{\var(\overline{T}_K(t))}\overline{T}_K(t)\right|\right]. 
\end{equation*}
The random variable within the expectation is a centered Gaussian one, hence the expectation of its modulus equals 
$\sqrt{2/\pi}$ times its standard deviation, so
\begin{multline*}
\Esp\left(N_{\overline{T}_K}([0,(K\pi)^\alpha])\right)\\
=\frac{1}{\pi}\int^{(K\pi)^\alpha}_{0}\left[c_K^{\prime\prime}(2t)-c_K^{\prime\prime}(0)-
\frac{(c_K^\prime(2t)-c_K^\prime(0))^2}{1+c_K(2t)}\right]^{1/2}\frac{dt}{\sqrt{1+c_K(2t)}}
\end{multline*}
Let us look at the integrand. 
By Lemmas \ref{lema:cK} and \ref{cotabaf} we know that the denominator is bounded away from zero uniformly in $K$ 
and that the first factor is bounded uniformly in $K$ in any compact interval not containing $0$. 
Finally taking Taylor expansions at $t=0$, taking into account the particular values of the derivatives 
of $c_K$ at $0$ 
obtained from Equation \eqref{cK}, 
it follows that the first factor is bounded uniformly in $K$ 
in a neighborhood of $0$. 
%
Hence, the result follows. 
\end{proof}


\begin{proof}[Proof of Lemma \ref{cota:vardiag}]
We start from (\ref{cK}) that shows that $c_K$ has derivatives of any order uniformly bounded.

Let $t_0$ to be as in Lemma \ref{cotabaf}. We know that denominator of (\ref{def}) is bounded away from zero for $t,s\,\in[t_0,\pi-t_0],$ this implies that $\overline r_K(s,t)$ has bounded derivatives of any order. 

On $[t_0,\pi-t_0]$  let $d_K$ the distance induced by the sixth derivative $\overline T^{(6)}_K$ of $\overline T_K$ i.e.
\begin{eqnarray*}d^2_K(s,t)&:=&\Esp(\overline T^{(6)}_k(s)-\overline T^{(6)}_k(t))^2\\
&=&\overline r_K^{(6)}(s,s)+\overline r_K^{(6)}(t,t)-2\overline r_K^{(6)}(s,t)\\
&=&\frac{(s-t)^2}{2}\overline r_K^{(7)}(\zeta,\zeta)\le C(s-t)^2.
\end{eqnarray*}
This implies by the Dudley theorem, see for example  Theorem 2.40 of \cite{aw}, that $\displaystyle \Esp(\sup_{s\in t_0,\pi-t_0]}(\overline T^{(6)}_K(s))),$ is bounded and by consequence $\Esp||\overline T^{(6)}_K||_{\infty},$ where the norm is taken on $[t_0,\pi-t_0]$.

Then applying Theorem 3.6 of \cite{aw} with $m=2$ and $p=5$ we get that
$$\Esp((N_{\overline T_K}[t_0,t_0+a])^2)\le c_{5,2}[A+\mathbf{C}+\Esp||\overline T^{(6)}_K||_{\infty}],$$ where $\mathbf{C}$ is a bound for the density of $T_K(s)$ which is finite because the variance is bounded away from zero. 
Finally observe that these bounds are uniform in $K$ since the covariances and its derivatives are uniformly bounded in $K$. 
This concludes the proof.
\end{proof}

\begin{proof}[Proof of Lemma \ref{lemma:chaos}]
We drop the dependence of the intervals in the definition of the crossings.
\begin{enumerate}
\item This is a consequence of Lemma \ref{cota:vardiag}, which bounds the integral near the diagonal 
(the difficult part), or of Corollary 3.7 of Aza\"{i}s \& Wschebor \cite{aw}. 
\item To show this statement we begin with the following inequality that is a consequence of the Rolle's Theorem 
$$N_{\overline{T}_K}(u)\le N_{\overline{T}'_K}(0)+1,$$where $N_{\overline{T}_K}(u)$ are the crossings of $\overline{T}_K$ of level $u$ and $ N_{\overline{T}'_K}(0)$ are the zeros of the derivative of $\overline{T}_K$. But when $u$ is in a neighborhood of zero
we have that $\lim_{u\to0} N_{\overline{T}_K}(u)=N_{\overline{T}_K}(0)$ a.s (in fact the random variables are  locally constant). Thus by convergence dominated theorem and given that  $\Esp(N^2_{\overline{T}_K}(0))<\infty$, fact that can be proven similarly that the result of Lemma \ref{cota:vardiag}, we have
$$\lim_{u\to0}\Esp(( N_{\overline{T}_K}(u))^2)=\Esp(( N_{\overline{T}_K}(0))^2)<\infty.$$

\item First, note that $N^{\eta}_{\overline{T}_K}\to N_{\overline{T}_K}$ almost surely. 

Therefore, it suffices to state the convergence of the second moment. 

Fatou's Lemma implies that
$$\Esp\left((N_{\overline{T}_K})^2\right)=\Esp\left(\lim_{\eta}(N^{\eta}_{\overline{T}_K})^2\right)
\leq\lim_{\eta}\Esp\left((N^{\eta}_{\overline{T}_K})^2\right).$$
On the other hand, Area-Formula (see Federer \cite{federer} applied for $d=1$, 
$B=[0,t]$, $g=\varphi_{\eta}$ and $f=\overline{T}_K\in C^1$), almost surely, permits us to write 
\begin{equation*}
N^\eta_{\overline{T}_K}=\int^{\infty}_{-\infty}N_{\overline{T}_K}(u)\varphi_{\eta}(u)du=\Esp_Z\left(N_{\overline{T}_K}(Z)\right),
\end{equation*}
where $Z$ has density $\varphi_\eta$. 
Thus, applying Jensen's inequality in the inner expectation and Tonelli's Theorem, we have
\begin{multline*}
\Esp\left((N^{\eta}_{\overline{T}_K})^2\right)=\Esp\left((\Esp_Z(N_{\overline{T}_K}(Z)))^2\right)
\leq\Esp\left(\Esp_Z(N^2_{\overline{T}_K}(Z))\right)\\
=\int^{\infty}_{-\infty}\Esp\left( N^2_{\overline{T}_K}(u)\right)\varphi_{\eta}(u)du
\end{multline*}
Since, $\varphi_{\eta}$ approximates the unity, since $\Esp\left(N^2_{\overline{T}_K}(u)\right)$ is continuous in $u$ 
(part 2 of this lemma), 
passing to the limit, with $\eta\to 0$, in the latter inequality gives the desired result.\\

\item This fact follows from a straightforward adaptation of the proof of
Lemma 2 in Kratz \& Le\'{o}n \cite{kratz-leon-97}. 
In fact, let $\xi$ be a standard Gaussian random variable. 
Then, since $\varphi_\eta$ and $|\cdot|$ are functions in $L^2(\varphi(dx))$ they have Hermite expansions
\begin{equation*}
|\xi|=\sum^{\infty}_{k=0}a_kH_k(\xi);\quad
\varphi_\eta(\xi)=\sum^{\infty}_{k=0}b_kH_k(\xi)
\end{equation*}
where the Hermite coefficients are defined by $a_k=1/\sqrt{k!}\int|x|H_(x)\varphi(x)dx$ and 
$b_k=1/\sqrt{k!}\int\varphi_\eta(x)H_(x)\varphi(x)dx$. 
Next, since $\overline{T_K}(s)$ and $\mathcal T^{\prime}_K(s)$ are standard Gaussian random variables for each $s\in[0,K\pi]_{-\alpha}$ 
we can apply these expansions pointwise in $s$ and replace them in the integral in the r.h.s. of \eqref{Nsigma}. 

The rest of the proof (ie: passing to the limit under the integral sign) follows exactly the same lines as in \cite[Lemma 2]{kratz-leon-97}, 
we just have to add the bound for $v_K(s)$ which is easy.
\end{enumerate}
\end{proof}

\begin{proof}[Proof of lemma \ref{var:unifacot}]The proof follows the same lines of the proof of the similar claim in Aza\"{i}s \& Le\'{o}n \cite[Page 7]{cltazaisleon} with minor changes. 
The idea is to divide the integral into two parts, near the diagonal the bound is obtained using Cauchy - Schwarz Inequality 
and Lemma \ref{cota:vardiag}, 
for the off-diagonal part, the bound is obtained applying 
Arcones inequality \cite[Lemma 1]{arcones} for the vectors $(\overline{T}_{K}(s),\mathcal {T}^{\prime}_K(s))$ 
and $(\overline{T}_{K}(t),\mathcal {T}^{\prime}_K(t))$: if
\begin{multline*}
\psi_K(\tau,\sigma):=\max\{|\cov(\overline{T}_{K}(s),\overline{T}_{K}(t))|+
|\cov(\overline{T}_{K}(s),\mathcal {T}^{\prime}_K(t))|;\\
|\cov(\mathcal {T}^{\prime}_K(s),\overline{T}_{K}(t))|+
|\cov(\mathcal {T}^{\prime}_K(s),\mathcal {T}^{\prime}_K(t)|\}<1
\end{multline*}
then
\begin{equation*}
\Esp\left[f_q(\overline{T}_{K}(s),\mathcal {T}^{\prime}_K(s))
f_q(\overline{T}_{K}(t),\mathcal {T}^{\prime}_K(t))\right]\leq\|f_q\|^2_2 \psi^q_K.
\end{equation*}
Observe that Inequalities \eqref{cota:supr} allow to choose $\tau,\sigma$ large enough in order to have 
Arcones coefficient $\psi_K<\rho<1$, for fixed $\rho$. 
Besides $\|f_q\|^2_2$ is easily seen to be uniformly bounded using Hermite polynomials properties.\end{proof}
\section{Notation table}

\begin{tabular}{rr}
$ T_K(t)$ & see \eqref{f:tk}\\
$N_Z(I) $&  number of zeroes  of process $Z$ on $I$\\
$V$ & limit variance in Theorem  \ref{main} \\
$T(t)$  & see Corollary \ref{cor}\\
$r(s,t) $  &covariance of $T(.)$\\
$sc(x) $ & $\sin(x)/x$\\
$X_k(t)$ & see (\ref{Xk}) \\
$\widetilde T_K(t)$, $ \widetilde X_K(t),$ & see( \ref{f:tt})\\
$C_K(t)$ & see( \ref{cK}) \\
$ r_K(s,t)$ & see( \ref{rK}) \\
$ V^2_K(t)$ &$r_K(t,t)$\\
$\tau $ & $s-t$\\
$\sigma$ & $s+t$\\
$\overline{T}_K(t)$& see( \ref{def}) \\
$\overline{r}_K(s,t)$& see(\ref{barr})\\
$\overline{r}(s,t)$& see(\ref{limitscbar})\\
$B$ & Brownian motion\\
$\gamma_K(t,\lambda)$& see(\ref{f:gam})\\
$h_K(t,\cdot)$&$\gamma_K(t,\cdot)/V_K(t)$\\
$\mathcal T^{\prime}_K(s)$& see( \ref{f:cal}\\
$a_k$ , $b_k$ & see(\ref{f:akbk})\\
$f_q(x,y)$ &  see (\ref{f:fq})\\
$I^{\overline{T}_K}_{q}\left([0,K\pi]_{-{\alpha}}\right) $& see( \ref{f:iqtk})\\
$\varphi$,$\varphi_\eta$ &  density of the $N(0,1)$, $N(0,\eta)$\\
$N_{\overline{T}_K}(u,I)$  & number of crossings of  $u$ by $\overline{T}_K$ in the interval $I$\\
 $N^{\eta}_{\overline{T}_K,\alpha}$ & see (\ref{Nsigma})\\
 $C$ &some constant with meaningless value\\
 $v^2(t)$&  see (\ref{var:Tprime})
\end{tabular}

%
%
%


\begin{thebibliography}{99}
\bibitem{arcones}
M. A. Arcones. Limit theorems for nonlinear functionals of a sta-
tionary Gaussian sequence of vectors. Ann. Probab., 22(4), (1994), 2242-2274.

\bibitem{cltazaisleon}
J-M Aza\"{i}s and J. R. Le\'{o}n. Clt for crossings of random trigonometric polynomials. 
Electron. J. Probab. 18, no. 68 (2013), 1-17.

\bibitem{aw}
J-M Aza\"{i}s and M. Wschebor. Level sets and extrema of random processes and fields. 
John Wiley \& Sons Inc., Hoboken, NJ, (2009).

\bibitem{dunnage}
J. E. A. Dunnage. The number of real zeros of a random trigonometric
polynomial. Proc. London Math. Soc. (3)16, (1966), 53-84.

\bibitem{erdos-offord}
P. Erd\"{o}s and A. C. Offord. On the number of real roots of a random
algebraic equation. Proc. London Math. Soc. (3)6, (1956), 139-160.

\bibitem{farahmandvar}
K. Farahmand. On the variance of the number of real zeros of a random
trigonometric polynomial. J. Appl. Math. Stochastic Anal., 10(1), (1997), 57-66.

\bibitem{farahmandli}
K. Farahmand and T. Li. Random trigonometric polynomials with
nonidentically distributed coefficients. Int. J. Stoch. Anal., (2010) pages Art.
ID 931565, 10pp.

\bibitem{farahmandcov}
K. Farahmand and M. Sambandham. Covariance of the number of real
zeros of a random trigonometric polynomial. Int. J. Math. Math. Sci., , (2006)
pages Art. ID 28492, 6pp.

\bibitem{federer}
H. Federer. Two theorems in geometric measure theory. Bull.
Amer. Math. Soc., 72, (1966), 719.

\bibitem{granville}
A. Granville and I. Wigman. The distribution of the zeros of
random trigonometric polynomials. Amer. J. Math., 133(2), (2011), 295-357.

\bibitem{hiu}
Hui-Hsiung Kuo. Introduction to stochastic integration. Universitext.
Springer, New York, 2006.

\bibitem{ibragimov-maslova1}
I. A. Ibragimov and N. B. Maslova. The mean number of real zeros of
random polynomials. I. Coefficients with zero mean. Teor. Verojatnost.
i Primenen., 16, (1971), 229-248.

\bibitem{ibragimov-maslova2}
I. A. Ibragimov and N. B. Maslova. The mean number of real zeros
of random polynomials. II. Coefficients with a nonzero mean. Teor.
Verojatnost. i Primenen., 16, (1971), 495-503.

\bibitem{kac-roots}
M. Kac. On the average number of real roots of a random algebraic
equation. Bull. Amer. Math. Soc., 49, (1943), 314-320.

\bibitem{kratz-leon-97}
M. F. Kratz and Jos\'{e} R. Le\'{o}n. Hermite polynomial expansion for
non-smooth functionals of stationary Gaussian processes: crossings and
extremes. Stochastic Process. Appl., 66(2), (1997), 237-252.

\bibitem{lo1}
J. E. Littlewood and A. C. Offord. On the number of real roots of
a random algebraic equation. III. Rec. Math. [Mat. Sbornik] N.S.,
12(54), (1943), 277-286.

\bibitem{lo2}
J. E. Littlewood and A. C. Offord. On the distribution of the zeros
and a-values of a random integral function. I. J. London Math. Soc.,
20, 1945, 130-136.

\bibitem{lo3}
J. E. Littlewood and A. C. Offord. On the distribution of zeros and a-
values of a random integral function. II. Ann. of Math. (2), 49:885 - 952;
errata 50, (1949), 990-991.

\bibitem{maslova1}
N. B. Maslova. The distribution of the number of real roots of random
polynomials. Teor. Verojatnost. i Primenen., 19, (1974), 488-500.

\bibitem{maslova2}
N. B. Maslova. The variance of the number of real roots of random
polynomials. Teor. Verojatnost. i Primenen., 19, (1974), 36-51.

\bibitem{taqqu-peccati}
G. Peccati and M. S. Taqqu. Wiener chaos: moments, cumu-
lants and diagrams, volume 1 of Bocconi \& Springer Series. Springer,
Milan, 2011. A survey with computer implementation, Supplementary
material available online.

\bibitem{peccati-tudor}
G. Peccati and C. A. Tudor. Gaussian limits for vector-
valued multiple stochastic integrals. In S\'{e}minaire de Probabilit\'{e}s 
XXXVIII, volume 1857 of Lecture Notes in Math., (2005), pages 247-262.
Springer, Berlin.

\bibitem{SuShao}
Z. Su and Q. Shao. Asymptotics of the variance of the number of real roots of random trigonometric polynomials.
Science China Mathematics. 55(11), (2012), 2347-2366.

\bibitem{wilkins}
J. Ernest Wilkins, Jr. Mean number of real zeros of a random trigono-
metric polynomial. Proc. Amer. Math. Soc., 111(3), (1991), 851-863.
\end{thebibliography}

\end{document}